\numberwithin{equation}{section}
\newtheorem{theorem}{Theorem}[section]
\newtheorem{corollary}[theorem]{Corollary}
\newtheorem{problem}{Problem}
\newtheorem{claim}[theorem]{Claim}
\newtheorem{subclaim}[theorem]{Subclaim}
\newtheorem{remark}[theorem]{Remark}
\newtheorem{proposition}[theorem]{Proposition}
\title{Tight upper bounds on the hop domination number of triangle-free graphs}
\author{Shinya Fujita\\[1ex]
\small School of Data Science, Yokohama City University,\\
\small Yokohama 236-0027, Japan\\
\small \tt fujita@yokohama-cu.ac.jp\\
[2.5ex]
Boram Park\\[1ex]
\small Department of Mathematics, Ajou University,\\
\small Suwon 16499, Republic of Korea \\
\small\tt borampark@ajou.ac.kr
}
\date{\today}
\begin{document}

\maketitle

\begin{abstract}
For a graph $G$, a subset $S$ of $V(G)$ is a {\it hop dominating set} of $G$ if every vertex not in $S$ has a $2$-step neighbor in $S$. The {\it hop domination number}, $\gamma_h(G)$, of $G$ is the minimum cardinality
of a hop dominating set of $G$.
In this paper, we show that for a connected triangle-free graph $G$ with $n\ge 15$ vertices, 
if $\delta(G)\ge 2$, then $\gamma_h(G)\le \frac{2n}{5}$, and the bound is tight.
We also give some tight upper bounds on $\gamma_h(G)$ for {triangle-free} graphs $G$ that contain a Hamiltonian path or a Hamiltonian cycle. 
\\

\noindent\textbf{Keywords:}  Hop dominating set, hop domination number, dominating set, domination number, triangle-free graph

\end{abstract} 

\section{Introduction} 
Let $G$ be a graph.
A subset $S$ of $V(G)$ is a \textit{dominating set} of $G$ if every vertex in $V(G)\setminus S$ has a neighbor in $S$. The
\textit{domination number} $\gamma(G)$ of $G$ is the minimum cardinality of a dominating set of $G$. 
A subset $S$ of $V(G)$ is a \textit{total dominating set} of $G$ if every vertex in $V(G)$ has a neighbor in $S$. 
The \textit{total domination number}, $\gamma_t(G)$, is the minimum cardinality of a total dominating set of $G$. 
The dominating set problem in graph theory has been a topic of interest for many researchers and is related to network coverage and control problems, applied in various fields like communication networks, social networks, and more (see \cite{book1,book2,book3,book4}).

For a graph $G$, a subset $S$ of $V(G)$ is a {\it hop dominating set} of $G$ if for any vertex $v\in V(G)\setminus S$, there exists a vertex $u_v\in S$ such that the distance between $u_v$ and $v$ in $G$ is exactly $2$. 
The {\it hop domination number}, $\gamma_h(G)$, of $G$ is the minimum cardinality
of a hop dominating set of $G$. By definition, note that $\gamma_h(K_n)=n$ holds for the complete graph $K_n$.
The concept of hop dominating set was originally introduced by Natarajan and Ayyaswamy in \cite{natarajan2015hop}. Henning and Rad \cite{henning20172} further explored this concept: Indeed, they showed that a connected graph $G$ of order $n\geq 3$ satisfies $\gamma_h(G)=n-1$ if and only if $G\cong K_n^-$ (that is, the graph obtained from $K_n$ by deleting one edge), thereby answering a question posed by Natarajan and Ayyaswamy in \cite{natarajan2015hop}; and moreover, they gave probabilistic upper bounds for the hop domination number of a graph and also showed that the decision problems on the hop dominating set problems are NP-complete for planar bipartite graphs and planar chordal graphs. More computational results were given by Henning, Pal, Pradhan \cite{henning2020graphs} along this line. 
Furthermore, they found an important relationship between the hop domination number and the total domination number of a triangle-free graph:

\begin{theorem}\cite{henning20172}\label{h<t}
    If $G$ is a triangle-free graph, then $\gamma_h(G)\leq \gamma_t(G)$. 
\end{theorem}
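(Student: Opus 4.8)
The plan is to show that the triangle-free hypothesis makes every total dominating set automatically a hop dominating set; since this holds in particular for a minimum total dominating set $S$ with $|S|=\gamma_t(G)$, we immediately obtain $\gamma_h(G)\le |S|=\gamma_t(G)$. So the whole theorem reduces to a single pointwise verification: for every $v\in V(G)\setminus S$, produce a vertex of $S$ at distance exactly $2$ from $v$.

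First I would fix such a $v$ and exploit totality twice. Because $S$ is a total dominating set, \emph{every} vertex of $G$ has a neighbor in $S$; in particular $v$ has a neighbor $w\in S$. Applying the same property to $w$ itself (totality dominates all vertices, not merely those outside $S$), the vertex $w$ also has a neighbor $u\in S$. This yields a path $u\sim w\sim v$, so that the distance between $u$ and $v$ is at most $2$.

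The heart of the argument is to pin this distance down to exactly $2$, and this is precisely where triangle-freeness enters. The three vertices are distinct: $w\neq v$ and $w\neq u$ since each is adjacent to the other, and $u\neq v$ because $u\in S$ while $v\notin S$. If we had $u\sim v$, then $u$, $v$, $w$ would form a triangle, contradicting the hypothesis that $G$ is triangle-free; hence $u\not\sim v$. Combined with $u\neq v$ and the path of length $2$, this forces $d_G(u,v)=2$, so $u\in S$ witnesses the hop domination of $v$.

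I do not expect a genuine obstacle here, since the argument is a short structural deduction rather than an extremal or counting argument. The only point demanding care is bookkeeping the distinctness of $u$, $v$, $w$ and checking that the candidate $u$ genuinely lies in $S$ and sits at distance exactly $2$ (and not $0$ or $1$) from $v$; the triangle-free assumption is exactly the lever that excludes the distance-$1$ case and thereby converts ``total domination'' into ``hop domination.''
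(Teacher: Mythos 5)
Your argument is correct: the paper only cites this result from Henning and Rad without reproducing a proof, and your deduction (neighbor $w\in S$ of $v$ by totality, neighbor $u\in S$ of $w$ by totality, $u\neq v$ since $u\in S$ and $v\notin S$, and triangle-freeness ruling out $u\sim v$, forcing $d_G(u,v)=2$) is exactly the standard proof of this theorem. No gaps; every minimum total dominating set is indeed a hop dominating set under the triangle-free hypothesis.
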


Considering the complete $\ell$-partite graph $K_{k,\ldots, k}$ with each partite set of size $k$ ($\ell\ge 3$), we see that the difference $\gamma_h(G)-\gamma_t(G)$ can be made arbitrarily large, meaning that $\gamma_h(G)\leq \gamma_t(G)$ does not necessarily hold in general graphs $G$. They further showed that, if $G$ is a triangle-free graph of order $n$ with $\delta(G)\geq 2$, then $\gamma_h(G)\leq (\frac{1+\ln \delta(G)}{\delta(G)})n$. As shown in the above result due to Henning and Rad \cite{henning20172}, a large clique in a graph $G$ increases the hop domination number of $G$. Hence, to obtain a good upper bound on $\gamma_h(G)$, we need to impose some forbidden subgraph condition on $G$, such as triangle freeness.

Motivated by this observation, together with their results on the hop domination number of a triangle-free graph, in this paper, we focus on giving a sharp upper bound on $\gamma_h(G)$ for triangle-free graphs $G$. Our main result is the following.

\begin{theorem}\label{thm:min:degree}
Let $\mathcal{B}=\{C_4,C_7,C_8,C_{14},G_9, G_{14}, G'_{14}\}$, where the graphs $G_9,G_{14}$ and $G'_{14}$ are given in Figure~\ref{fig:BB}. Let $n$ be a positive integer with $n\ge 4$. For a connected triangle-free graph $G$ with $n$ vertices, 
if $\delta(G)\ge 2$ and $G\not\in\mathcal{B}$, then $\gamma_h(G)\le \frac{2n}{5}$.
\end{theorem}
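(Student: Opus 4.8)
The plan is to prove the statement by strong induction on $n$, the engine being a \emph{``remove five, pay two''} reduction. Concretely, I would try to locate in any sufficiently large $G$ a set $W$ of exactly five vertices and a two-element set $T\subseteq V(G)$ such that: (i) $T$ hop-dominates every vertex of $W$ together with the few surviving vertices adjacent to $W$; and (ii) the graph $G'$ obtained by deleting $W$ (and reconnecting along the deletion) is again connected, triangle-free, has $\delta(G')\ge 2$, and does not lie in $\mathcal{B}$. If such $W,T$ exist, then $G'$ has $n-5$ vertices, so by the induction hypothesis $\gamma_h(G')\le\frac{2(n-5)}{5}$, and a minimum hop-dominating set of $G'$ together with $T$ gives $\gamma_h(G)\le\frac{2(n-5)}{5}+2=\frac{2n}{5}$, closing the induction. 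The small graphs that never admit such a reduction are checked by hand, and these checks are exactly what produce the finite exceptional family $\mathcal{B}$ and the threshold $n\ge 15$.

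The search for reducible configurations is organized by the maximum degree and by the block structure. If $\Delta(G)=2$, then, being connected with $\delta(G)\ge 2$, $G$ is a single cycle $C_n$, and here the bound is computed exactly: the \emph{distance-two graph} of $C_n$ (vertices joined when at distance precisely $2$) is a copy of $C_n$ when $n$ is odd and a disjoint union of two copies of $C_{n/2}$ when $n$ is even, so $\gamma_h(C_n)$ equals $\lceil n/3\rceil$ or $2\lceil n/6\rceil$, respectively; comparing these with $\frac{2n}{5}$ isolates precisely the cycle exceptions $C_4,C_7,C_8,C_{14}$. If $\Delta(G)\ge 3$, I would first dispose of non-$2$-connected $G$ by peeling off a leaf block $B$ at its unique cut vertex $v$: since deleting $V(B)\setminus\{v\}$ disturbs no distance among the surviving vertices, a large leaf block yields a clean reducible piece. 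In the remaining $2$-connected case I would work with the \emph{threads} (maximal paths of consecutive degree-two vertices joining two branch vertices) and excise a block of five thread vertices, or otherwise an ear of length five supplied by an ear decomposition.

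The technical heart---and the step I expect to be the main obstacle---is the \emph{seam analysis} for the thread or ear excision. Because hop-domination depends on distance being \emph{exactly} two, shortening a thread is dangerous: it can turn a distance-two pair into a longer one (so that an inherited domination from $G'$ fails in $G$) and can create shortcuts producing spurious distance-two pairs. The saving grace is locality: shortening a thread only perturbs the distance-two neighborhoods of vertices within distance two of the seam, while every vertex deeper in the graph retains its distance-two neighborhood and hence its inherited hop-domination. The real difficulty is that the two vertices of $T$ must then be chosen to repair the coverage of \emph{both} sides of the seam simultaneously, and a naive excision of five consecutive degree-two vertices does not leave enough room for two vertices to cover both boundary endpoints together with the five removed vertices. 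Resolving this forces a careful classification of admissible reducible configurations (placing the seam in the interior of a long thread, or anchoring the removed set at branch vertices whose other neighbors already guarantee boundary coverage), and in some cases an amortized/discharging refinement of the bare induction.

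Throughout, the triangle-free hypothesis is what makes the local repairs possible and is the one structural fact I would lean on hardest: for every vertex $w$ the neighborhood $N(w)$ is an independent set whose members are pairwise at distance exactly two, i.e.\ a clique in the distance-two graph, so a single well-placed vertex hop-dominates an entire neighborhood. Exploiting this repeatedly keeps the per-configuration cost at the extremal ratio $2{:}5$ dictated by $C_5$. The only graphs resisting every such reduction are small and dense; their direct analysis, together with the cycle computation above, accounts for the sporadic exceptions $G_9,G_{14},G'_{14}$ and confirms that for $n\ge 15$ a reduction always applies.
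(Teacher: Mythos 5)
Your proposal is a strategy outline rather than a proof, and the step you yourself flag as ``the technical heart''---the seam analysis showing that a five-vertex excision with a two-vertex repair set always exists outside a finite list of exceptions---is exactly the part that is missing. The arithmetic $\frac{2(n-5)}{5}+2=\frac{2n}{5}$ closes the induction \emph{only if} such a configuration can always be found, and the extremal examples (Figure~\ref{fig:Tight}, where $\gamma_h(G)=\frac{2n}{5}$ exactly) show there is zero slack: every single reduction in your scheme must be tight, so you cannot afford even one configuration where the repair costs three vertices or removes only four. Designing a complete set of reducible configurations for a domination-type parameter at the ratio $2{:}5$ is essentially the content of the long McCuaig--Shephard proof of Theorem~\ref{thm:original}; you are implicitly proposing to redo an argument of that difficulty from scratch for hop domination, while the assertions ``a large leaf block yields a clean reducible piece,'' ``otherwise an ear of length five,'' and ``the only graphs resisting every such reduction are small and dense'' are left entirely unsubstantiated. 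There is also a correctness hazard in ``reconnecting along the deletion'': adding edges after deleting $W$ can create new distance-two pairs in $G'$ that are \emph{not} distance-two pairs in $G$, so a hop dominating set of $G'$ need not pull back to one of $G$; the paper only performs such surgeries ($(G-xx_1)+x_1x_4$, attaching pendent $4$-cycles) in tightly controlled situations where Proposition~\ref{lemma:pendent_cycle} lets it prescribe which vertices of the modified graph lie in the dominating set, and then repairs by hand.

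The idea you are missing, and the one that lets the paper avoid building a full reducible-configuration catalogue, is the reduction to \emph{ordinary} domination in the distance-two graph: $\gamma_h(G)=\gamma(G^*)$ with $G^*=Dist(G{:}2)$ (Proposition~\ref{prop:G*}), so whenever $\delta(G^*)\ge 2$ and no component of $G^*$ is one of the seven McCuaig--Shephard exceptions, Theorem~\ref{thm:original} delivers $\gamma_h(G)\le\frac{2n}{5}$ in one stroke. The entire structural analysis in the paper (the minimal-counterexample setup, the cut-edge and degree-two-path claims, the study of a vertex $u$ with a unique $2$-step neighbor) exists to handle the residual cases $\delta(G^*)=1$ or $G^*$ having an exceptional component, not to carry the main load. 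Your correct observations---the exact computation of $\gamma_h(C_n)$ via $Dist(C_n{:}2)$, and the fact that in a triangle-free graph each neighborhood $N(w)$ is a clique of $G^*$---are genuine ingredients of the paper's argument, but as it stands the proposal does not constitute a proof of the theorem.
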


\begin{figure}[h!]
    \centering
    \includegraphics[width=14cm]{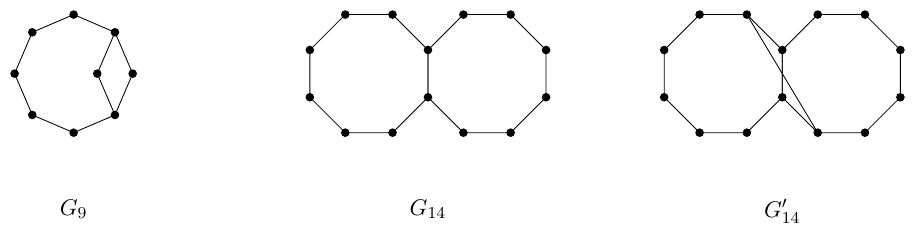}
    \caption{The graphs $G_9,G_{14}$ and $G'_{14}$ in $\mathcal{B}$}
    \label{fig:BB}
\end{figure}

Theorem~\ref{thm:min:degree} says that for a triangle-free graph $G$ with $\delta(G)\ge 2$, if $G$ has at least 15 vertices then  $\gamma_h(G) \le \frac{2n}{5}$.

Let $P_t:v_1v_2\ldots v_t$ be a path on $t$ vertices, and for each vertex $v_i$ of $P_t$, we attach an edge $v_iu_i$ to $v_i$ and then attach a $4$-cycle at the vertex $u_i$. Let $G$ be the resulting graph. Figure~\ref{fig:Tight} shows an illustration when $t=6$. For this graph $G$, we always have $\delta(G)\ge2$ and $\gamma_h(G)=2t=\frac{2|V(G)|}{5}$. Thus, the ratio $\frac{2}{5}$ in Theorem~\ref{thm:min:degree} is tight. 

We also show that the ratio $\frac{2}{5}$ can be improved up to roughly $\frac{1}{3}$ if a triangle-free graph contains a Hamiltonian path or a Hamiltonian cycle (see Corollary~\ref{newcor}). 

\begin{figure}[h!]
    \centering
    \includegraphics[width=12cm]{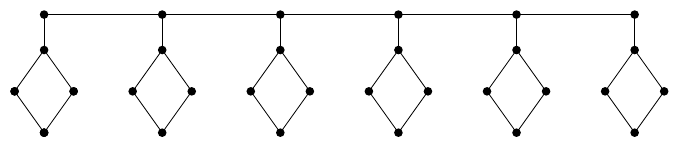}
    \caption{A graph $G$ such that $\gamma_h(G)=\frac{2|V(G)|}{5}$}
    \label{fig:Tight}
\end{figure}

Henning \cite{henning2009DM} showed that every graph $G$ with no isolated vertex satisfies the inequalities $\gamma(G)\leq \gamma_t(G)\leq 2\gamma(G)$. Hence, by Theorem~\ref{h<t}, $\gamma_h(G)\leq \gamma_t(G)\leq 2\gamma(G)$ holds for connected triangle-free graphs $G$. With this relationship in mind, what kind of relative magnitude relationships can be obtained when comparing the tight upper bounds of these three graph parameters for connected triangle-free graphs $G$ with $\delta(G)\geq 2$?
 
In \cite{Domination_MS1989}, McCuaig and Shephard showed the following.

\begin{theorem}[\cite{Domination_MS1989}]\label{thm:original}
For a connected graph $G$ with $n$ vertices, 
if  $\delta(G)\ge 2$ and $G$ is not a graph in Figure~\ref{fig:b}, then $\gamma(G)\le \frac{2n}{5}$.
\end{theorem}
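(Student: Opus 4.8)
The plan is to prove the equivalent amortized inequality $5\gamma(G)\le 2n$ by exhibiting a dominating set whose size is controlled by a charging argument, working with a minimal counterexample. First I would reformulate the goal. Any dominating set $D$ partitions $V(G)$ into $D$ and the externally dominated set $V(G)\setminus D$, so $|D|\le\frac{2n}{5}$ is equivalent to $|V(G)\setminus D|\ge\frac32|D|$. Thus it suffices to produce a dominating set in which, on average, each chosen vertex is responsible for at least $\frac32$ vertices outside $D$. The natural way to make this precise is to assign every vertex of $V(G)\setminus D$ to a neighbor in $D$ and let $p(u)$ denote the number of vertices assigned to $u\in D$; then $\sum_{u\in D}p(u)=|V(G)\setminus D|$, and I need the average of $p(u)$ to be at least $\frac32$.

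The constructive core would be a two-phase selection. In the first phase I greedily add to $D$ any vertex whose addition newly dominates at least three previously undominated vertices; each such vertex has charging efficiency at least $3>\frac52$ and is harmless. When this stalls, the still-undominated set $U$ has the property that no closed neighborhood meets $U$ in more than two vertices, so $G[U]$ has maximum degree at most $1$: it is a disjoint union of isolated vertices and isolated edges. The delicate phase is handling $U$: an isolated edge can be covered by one new vertex, giving $p(u)=1$, and an isolated vertex seemingly needs a vertex of its own with $p(u)=0$, and it is precisely these cheap configurations that threaten the $\frac25$ ratio. Here I would use $\delta(G)\ge2$ and connectivity, arguing that each such deficient leftover vertex borders a cluster of surplus efficiency generated in the first phase, and redistributing that surplus to pay for the deficiency unless the local structure closes up into one of a short list of small graphs.

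I expect the main obstacle to be exactly this final bookkeeping: showing that the discharging balances and, more painfully, pinning down that the only configurations in which it fails are the graphs of Figure~\ref{fig:b}. Concretely, in a minimal counterexample I would want reducibility lemmas---for instance, that a path of three consecutive degree-$2$ vertices, or two adjacent deficient leftover vertices sharing a dominated neighbor, can be deleted or contracted to a smaller graph that is still connected, still has $\delta\ge2$, and is still non-exceptional---so that induction applies and the extended dominating set stays within budget. Verifying that these reductions preserve every hypothesis (in particular that they never create a new degree-$1$ vertex and never land inside the exceptional family) is where the case analysis proliferates, and carving out precisely the exceptional graphs where no reduction exists is the crux of the whole argument.
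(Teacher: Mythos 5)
Your proposal cannot be checked against ``the paper's own proof'' because the paper does not prove this statement at all: Theorem~\ref{thm:original} is quoted verbatim from McCuaig and Shephard \cite{Domination_MS1989} and used as a black box. So the only question is whether your argument stands on its own, and as written it does not: it is a strategy outline in which every decisive step is deferred. You say yourself that ``the main obstacle'' is the final bookkeeping, the reducibility lemmas, and the identification of the exceptional family, and none of these is carried out. For a bound whose extremal examples meet $\frac{2n}{5}$ exactly, the amortized accounting has zero slack, so ``redistribute the surplus unless the local structure closes up into a short list of small graphs'' is not bookkeeping to be verified later --- it \emph{is} the theorem.

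There is also a concrete reason to doubt that your particular two-phase scheme closes. Your phase-1 vertices carry a surplus of only $\tfrac12$ each (a vertex newly dominating three vertices accounts for $3$ against a quota of $\tfrac52$), and you need this surplus to pay for the deficient leftovers in $U$. But in graphs assembled from long induced paths of degree-$2$ vertices --- exactly the graphs that are tight for the bound, such as the construction in Figure~\ref{fig:Tight} --- the greedy phase generates its thin surplus far from where the isolated leftover vertices appear, and a deficient leftover need not ``border a cluster of surplus efficiency'' at all; that adjacency claim is asserted, not argued, and I do not believe it is true as stated without first proving reducibility of long degree-$2$ paths. This is presumably why the actual proof in \cite{Domination_MS1989} is not a greedy/discharging argument but a lengthy induction on a catalogue of reducible configurations (in the spirit of your last paragraph), with the seven graphs of Figure~\ref{fig:b} emerging as the irreducible residue. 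In short: the skeleton is a reasonable way to \emph{start} thinking about the theorem, but the proposal contains no proof of any of the claims that carry the weight.
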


\begin{remark}\label{rmk}
\rm    Note that for a graph $H_i$ in Figure~\ref{fig:b}, $\gamma(H_i)\le \frac{2n+2}{5}$. Thus, together with Theorem~\ref{thm:original}, it follows that for every connected graph $G$ with $\delta(G)\ge 2$, we have $\gamma(G)\le \frac{2n+2}{5}$.
\end{remark}

\begin{figure}[h!]
    \centering
    \includegraphics[width=14cm]{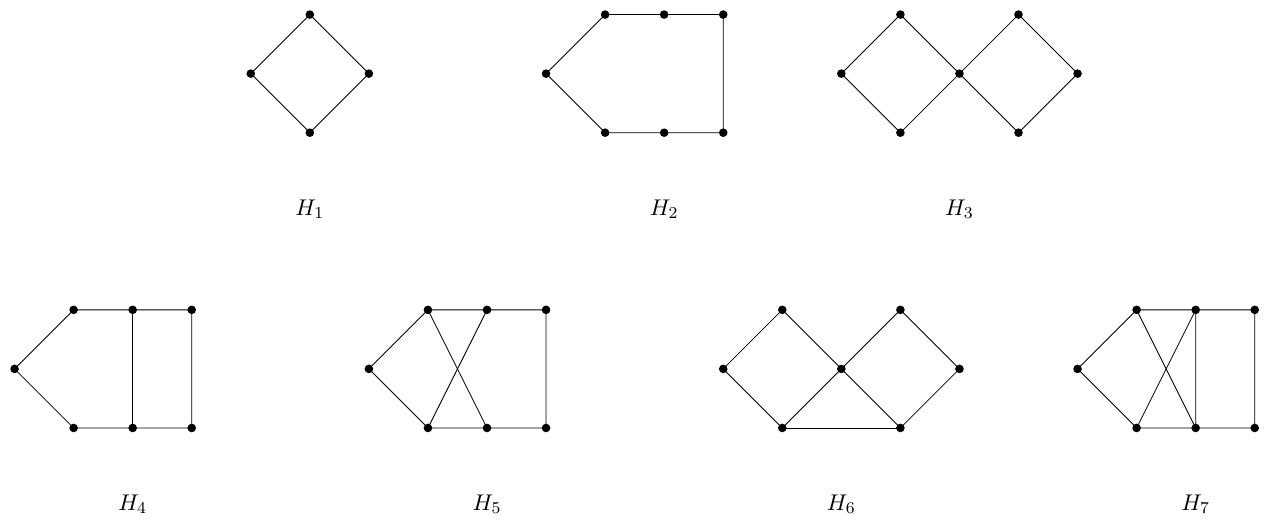}
    \caption{The graphs $H_1\sim H_7$}
    \label{fig:b}
\end{figure}

As is shown in \cite{Domination_MS1989}, there exist infinitely many connected triangle-free graphs $G$ having $\gamma(G)=\frac{2|V(G)|}{5}$ and $\delta(G)\geq 2$. 

On the other hand, Henning and Yeo \cite{henningyeoGC} gave a sharp upper bound $|V(G)|/2+\max\{1,\frac{|V(G)|}{2(g+1)}\}$ on $\gamma_t(G)$ for graphs $G$ with girth at least $g$, which means that we have the sharp upper bound $\frac{3|V(G)|}{5}$ on $\gamma_t(G)$ for the connected triangle-free graph $G$.  

Therefore, somewhat surprisingly, it can be observed from our main theorem that the tight upper bound of $\gamma_h(G)$ not only approaches but actually matches the tight upper bound of $\gamma(G)$ for connected triangle-free graphs $G$ with $\delta(G)\geq 2$. 

One might ask what happens if we relax the minimum degree condition ``$\delta(G)\geq 2$" for large connected triangle-free graphs in our work. To answer the question, we give some preliminaries.   

For a graph $G$, let $Dist(G:2)$ be the graph whose vertex set is $V(G)$ and two vertices are adjacent in $Dist(G:2)$ if and only if they have distance exactly two in $G$. 
For a vertex $v$ of a graph $G$, let $N_2(v;G)$ be the set of \textit{$2$-step neighbors} of $v$ in $G$, that is, $N_2(v;G):=\{u\in V(G)\setminus \{v\}:uv\notin E(G)$ and $N_G(u)\cap N_G(v)\neq\emptyset\}$. 
The following are some basic observations on $\gamma_h(G)$, with certain results referenced from \cite{henning20172}.

\begin{proposition}[\cite{henning20172}]\label{prop:G*} 
Let $G$ be a connected graph and $G^*=Dist(G:2)$. The following hold. 
\begin{itemize}
\item[\rm(i)] $\gamma_h(G)=\gamma(G^*)$;
\item[\rm(ii)] $N_2(v;G)=N_{G^*}(v)$ for every vertex $v$.
\end{itemize}    
\end{proposition}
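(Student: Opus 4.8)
The plan is to establish part (ii) first by directly unwinding the two definitions involved, and then to obtain part (i) as an immediate consequence: once (ii) identifies the neighborhoods in $G^*=Dist(G:2)$ with the $2$-step neighborhoods in $G$, the notions of hop dominating set and dominating set coincide on the common vertex set $V(G)=V(G^*)$.

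For part (ii), I would fix a vertex $v$ and prove the set equality by showing that for every $u\neq v$,
\[
u\in N_{G^*}(v)\iff u\in N_2(v;G).
\]
By the definition of $Dist(G:2)$, the left-hand side says precisely that $\operatorname{dist}_G(u,v)=2$. The key elementary observation is that $\operatorname{dist}_G(u,v)=2$ holds if and only if $uv\notin E(G)$ (so the distance exceeds $1$) and $u,v$ share a common neighbor (so a path of length $2$ joins them, forcing the distance to be at most $2$). These latter two conditions are exactly those defining $N_2(v;G)$, which yields the equivalence and hence the desired equality.

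For part (i), I would argue that a set $S\subseteq V(G)$ is a hop dominating set of $G$ if and only if it is a dominating set of $G^*$. Indeed, $S$ hop dominates $G$ precisely when every $v\in V(G)\setminus S$ satisfies $S\cap N_2(v;G)\neq\emptyset$, while $S$ dominates $G^*$ precisely when every $v\in V(G^*)\setminus S$ satisfies $S\cap N_{G^*}(v)\neq\emptyset$; by part (ii) these two conditions are identical. Since $V(G)=V(G^*)$, the two families of sets coincide, so their minimum cardinalities agree, giving $\gamma_h(G)=\gamma(G^*)$.

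There is essentially no genuine obstacle here, as the statement is a direct translation between definitions; the only point that warrants a moment of care is the degenerate case in which $v$ has no vertex at distance exactly $2$ in $G$ (equivalently, $v$ is isolated in $G^*$). Such a $v$ must then lie in every hop dominating set of $G$ as well as in every dominating set of $G^*$, so the equivalence underlying part (i) remains valid without exception.
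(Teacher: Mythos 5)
Your proof is correct. Note that the paper itself gives no proof of this proposition --- it is quoted from Henning and Rad \cite{henning20172} as a known fact --- so there is nothing to compare against; your direct unwinding of the definitions (distance exactly $2$ is equivalent to non-adjacency plus a common neighbor, hence $N_2(v;G)=N_{G^*}(v)$, hence hop dominating sets of $G$ are exactly dominating sets of $G^*$) is exactly the intended argument, and your remark about vertices isolated in $G^*$ correctly disposes of the only degenerate case.
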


Ore\cite{ore1962} showed that, any graph $G$ of order $n$ with $\delta(G)\geq 1$ satisfies $\gamma(G)\leq n/2$. 
Note that any connected triangle-free graph $G$ with $n\geq 4$ vertices satisfies $\delta(G^*)\geq 1$ unless $G\cong K_{1,t}$ for an integer $t\geq 3$. Since $\gamma_h(K_{1,t})=2$, 
combining these facts together with Proposition~\ref{prop:G*}(i), we can obtain the following theorem. 

\begin{theorem}\label{conn.version}
  Let $n$ be a positive integer with $n\ge 4$. If $G$ is a connected triangle-free graph with $n$ vertices, then $\gamma_h(G)\le \frac{n}{2}$.  
\end{theorem}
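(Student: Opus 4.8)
The plan is to reduce the statement to Ore's theorem through the auxiliary graph $G^*=Dist(G:2)$. By Proposition~\ref{prop:G*}(i) we have $\gamma_h(G)=\gamma(G^*)$, so it suffices to bound $\gamma(G^*)$. Since Ore's theorem \cite{ore1962} gives $\gamma(H)\le |V(H)|/2$ for every graph $H$ with $\delta(H)\ge 1$, and since $|V(G^*)|=n$, the desired bound $\gamma_h(G)\le n/2$ would follow at once provided $G^*$ has no isolated vertex, i.e. $\delta(G^*)\ge 1$. Thus the crux is to determine exactly when $G^*$ can have an isolated vertex and to dispose of those exceptional graphs by hand.

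First I would prove the structural claim: if $G$ is a connected triangle-free graph on $n\ge 4$ vertices and $\delta(G^*)=0$, then $G\cong K_{1,t}$ for some $t\ge 3$. By Proposition~\ref{prop:G*}(ii), a vertex $v$ is isolated in $G^*$ precisely when $N_2(v;G)=\emptyset$. Suppose $v$ is such a vertex and let $u$ be any neighbor of $v$, which exists since $G$ is connected with $n\ge 2$. If $u$ had a neighbor $w\ne v$, then either $w$ is nonadjacent to $v$, forcing $w\in N_2(v;G)$ and contradicting $N_2(v;G)=\emptyset$, or $w$ is adjacent to $v$, producing the triangle $uvw$ and contradicting triangle-freeness. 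Hence every neighbor of $v$ is a leaf, and connectivity forces $G$ to be exactly the star centered at $v$, that is $G\cong K_{1,t}$ with $t=\deg(v)$; since $n\ge 4$ we have $t=n-1\ge 3$.

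With this claim in hand, the argument splits into two cases. If $G\not\cong K_{1,t}$, then $\delta(G^*)\ge 1$, and Ore's theorem applied to $G^*$ yields $\gamma_h(G)=\gamma(G^*)\le n/2$. If instead $G\cong K_{1,t}$ with $t\ge 3$, then $\gamma_h(G)=2$ as recorded in the excerpt, and $2\le (t+1)/2=n/2$ holds precisely because $t\ge 3$. In either case $\gamma_h(G)\le n/2$, which completes the proof.

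I expect the only delicate point to be the structural claim, specifically the case analysis showing that a vertex with no $2$-step neighbor forces all of its neighbors to be leaves. This is exactly where triangle-freeness is indispensable, and one must check that the two sub-cases (whether or not $w$ is adjacent to $v$) are genuinely exhaustive. Everything else is a direct invocation of Proposition~\ref{prop:G*} together with Ore's bound, so no substantial computation is needed.
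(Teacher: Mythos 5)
Your proposal is correct and follows essentially the same route as the paper: apply Ore's bound to $G^*=Dist(G:2)$ via Proposition~\ref{prop:G*}(i), after observing that $\delta(G^*)\ge 1$ unless $G\cong K_{1,t}$, and handle the star separately using $\gamma_h(K_{1,t})=2$. The only difference is that you spell out the structural claim that the paper merely asserts, and your case analysis there is complete and correct.
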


The bound on $\gamma_h(G)$ is best possible in this theorem. To see this, consider the cases $G=K_{1,3}$ or $C_8$. 
Considering the graph $G$ obtained from $K_{1,t}$ by subdividing each edge with three vertices, we see that $\gamma_h(G)=2t=\frac{|V(G)|-1}{2}$, thereby showing that the ratio $\frac{1}{2}$ of the upper bound on $\gamma_h(G)$ in Theorem~\ref{conn.version} is tight.
Since $t$ can be arbitrarily large, our main result, Theorem~\ref{thm:min:degree}, substantially refines the coefficient of the upper bound on $\gamma_h(G)$ in Theorem~\ref{conn.version}, reducing it from $\frac{1}{2}$ to $\frac{2}{5}$ for connected triangle-free graphs $G$ with $\delta(G)\geq 2$.

\section{Preliminaries}

In this section, we give further preliminaries to prove our main theorem.

\begin{proposition}\label{obs:two_cycles} 
Let $G$ be a {triangle-free} graph. Then the following hold:
\begin{itemize}
\item[\rm(i)] If  $\emptyset\neq N_G(w') = N_G(w)$ for some two vertices $w$ and $w'$ of $G$, then $\gamma_h(G)\le \gamma_h(G-w)$.
\item[\rm(ii)] For a spanning subgraph $H$ of $G$, $\gamma_h(G)\le\gamma_h(H)$. 
\end{itemize}  
\end{proposition}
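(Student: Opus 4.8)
The plan is to prove each part by showing that a hop dominating set of $G-w$ (resp.\ of the spanning subgraph $H$) can be reused, essentially verbatim, as a hop dominating set of $G$.

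For (ii), let $H$ be a spanning subgraph of $G$ and let $S$ be a hop dominating set of $H$. I claim $S$ is already a hop dominating set of $G$, which gives $\gamma_h(G)\le |S|$ and hence $\gamma_h(G)\le\gamma_h(H)$. To see this, fix $v\in V(G)\setminus S=V(H)\setminus S$. Since $S$ hop-dominates $v$ in $H$, there is $u\in S$ with a common neighbor $x$ satisfying $ux,vx\in E(H)$ and $uv\notin E(H)$. As $E(H)\subseteq E(G)$, the vertex $x$ remains a common neighbor of $u$ and $v$ in $G$, so the only thing to rule out is $uv\in E(G)$; but then $u,v,x$ would form a triangle in $G$, contradicting triangle-freeness. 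Hence $uv\notin E(G)$, so $d_G(u,v)=2$ and $u$ hop-dominates $v$ in $G$. This is the one place where triangle-freeness is essential.

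For (i), I first record two facts about the twins $w,w'$. Since $N_G(w)=N_G(w')\neq\emptyset$, they are non-adjacent (otherwise $w\in N_G(w')=N_G(w)$, impossible) and they share a neighbor, so $w'\in N_2(w;G)$. Writing $G'=G-w$, note $N_{G'}(w')=N_G(w')$. The plan is to take a minimum hop dominating set $S$ of $G'$ and verify that $S$ hop-dominates $G$; this yields $\gamma_h(G)\le|S|=\gamma_h(G-w)$. Two things must be checked. First, every $v\in V(G')\setminus S$ stays hop-dominated in $G$: a vertex $u\in S$ with $d_{G'}(u,v)=2$ keeps its common neighbor with $v$ when $w$ is added back, and no new edge $uv$ is created (since $u,v\neq w$), so $d_G(u,v)=2$ still. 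Second, the vertex $w\notin S$ must be hop-dominated, and here I split on $w'$: if $w'\in S$, then since $w'\in N_2(w;G)$ the vertex $w$ is hop-dominated directly by $w'$; if $w'\notin S$, then $w'$ is hop-dominated in $G'$ by some $u\in S$, and because $w$ and $w'$ have identical neighborhoods one checks $N_2(w';G')\subseteq N_2(w;G)$, so the same $u$ hop-dominates $w$ in $G$.

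The routine-but-delicate point — the main obstacle — is the bookkeeping of how $2$-step neighborhoods change under deleting and re-inserting $w$, namely verifying the inclusion $N_2(w';G')\subseteq N_2(w;G)$ and that distances equal to $2$ are preserved rather than collapsing to $1$. The equality $N_G(w)=N_G(w')$ is exactly what makes $w'$ a faithful surrogate for $w$, so that any vertex hop-dominating $w'$ also hop-dominates $w$.
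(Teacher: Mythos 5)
Your proposal is correct and follows essentially the same route as the paper: for (ii) you reuse a hop dominating set of $H$ in $G$, with triangle-freeness ruling out the distance collapsing to $1$, and for (i) you split on whether $w'\in S$ and use $N_G(w)=N_G(w')$ to transfer $2$-step neighbors of $w'$ to $w$, exactly as in the paper's argument. The extra bookkeeping you supply (non-adjacency of $w,w'$ and the inclusion $N_2(w';G-w)\subseteq N_2(w;G)$) is correct and only makes explicit what the paper leaves implicit.
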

\begin{proof} (i) Take a minimum hop dominating set $S$ of $G-w$. If $w'\in S$, then $w'$ is a 2-step neighbor of $w$. If $w'\not\in S$, then $w'$ has a 2-step neighbor $v$ in $S$, and so $v$ is also a 2-step neighbor of $w$. Then $S$ is a hop dominating set of $G$, which proves (i).

(ii) Take a minimum hop dominating set $S$ of $H$. Then every vertex $v$ not in $S$ has a 2-step neighbor $v'$ in $H$. Since  $v'$ is also a 2-step neighbor of $v$ in $G$ by the assumption that $G$ is a triangle-free graph, $S$ is a hop dominating set of $G$.
\end{proof}

Although it is a folklore, the domination numbers of paths and cycles were given in \cite{Domination_path_cycle}. 

\begin{theorem}[\cite{Domination_path_cycle}]\label{pathcycle}
For an integer $n\geq 3$, $\gamma(C_n)=\lceil n/3\rceil$. 
\end{theorem}

In view of Proposition~\ref{prop:G*}, note that $\gamma_h(P_n)=\gamma(P_n^*)=\gamma(P_{\lceil n/2\rceil})+\gamma(P_{\lfloor n/2\rfloor})$ and $\gamma_h(C_n)=\gamma(C_n^*)=2\gamma(C_{n/2})$ for even $n$ and $\gamma_h(C_n)=\gamma(C_n^*)=\gamma(C_{n})$ for odd $n$, where $P_n^*=Dist(P_n:2)$ and $C_n^*=Dist(C_n:2)$.
Thus, by Proposition~\ref{prop:G*} and Theorem~\ref{pathcycle}, we obtain the hop domination numbers of paths and cycles as follows.

\begin{theorem}\label{pathcyclehop}
    Let $n$ be an integer. The following statements hold:
\begin{description}
        \item{\rm{(i)}} For $n\ge 1$, $\gamma_h(P_n)=\displaystyle\left\lceil\frac{\lfloor n/2\rfloor}{3}\right\rceil+\left\lceil\frac{\lceil n/2\rceil}{3}\right\rceil$.
        \item{\rm{(ii)}} For $n\ge 3$, 
$\gamma_h(C_n)= \begin{cases}
   2\lceil\frac{n}{6}\rceil&\text{even }n,\\
   \lceil\frac{n}{3}\rceil&\text{odd }n.\\
\end{cases}$ \end{description}
\end{theorem}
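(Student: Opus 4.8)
The plan is to pass from hop domination to ordinary domination through Proposition~\ref{prop:G*}(i), which gives $\gamma_h(G)=\gamma(Dist(G:2))$, and then to identify $P_n^*:=Dist(P_n:2)$ and $C_n^*:=Dist(C_n:2)$ explicitly as disjoint unions of small paths or cycles. Writing $P_n:v_1v_2\ldots v_n$ and $C_n:v_1v_2\ldots v_nv_1$, the first task is to determine when two vertices lie at distance exactly $2$: for $n$ large enough this happens precisely when their indices differ by $2$ (on the path) or by $2$ modulo $n$ (on the cycle). One must record the small cases separately, since in a short cycle the two ways around can make a nominally ``distance-$2$'' pair adjacent instead.

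For part (i), the edge set of $P_n^*$ is exactly $\{v_iv_{i+2}:1\le i\le n-2\}$. These edges link the odd-indexed vertices $v_1,v_3,\ldots$ into one path and the even-indexed vertices $v_2,v_4,\ldots$ into another, with no edges between the two families; hence $P_n^*\cong P_{\lceil n/2\rceil}\sqcup P_{\lfloor n/2\rfloor}$. Since the domination number is additive over connected components and $\gamma(P_m)=\lceil m/3\rceil$ (the path companion of Theorem~\ref{pathcycle}, from the same source), I would conclude $\gamma_h(P_n)=\gamma(P_n^*)=\lceil\lfloor n/2\rfloor/3\rceil+\lceil\lceil n/2\rceil/3\rceil$, which is the claimed expression.

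For part (ii), I would split on the parity of $n$. When $n$ is even, stepping by $2$ around $C_n$ traces two vertex-disjoint cycles on the odd- and even-indexed vertices, so $C_n^*\cong C_{n/2}\sqcup C_{n/2}$ for $n\ge 6$; the boundary case $n=4$ gives $C_4^*\cong 2K_2$, which I would handle directly to get $\gamma_h(C_4)=2$. Theorem~\ref{pathcycle} then yields $\gamma_h(C_n)=2\gamma(C_{n/2})=2\lceil n/6\rceil$. When $n$ is odd, $\gcd(2,n)=1$, so stepping by $2$ visits every vertex in a single Hamiltonian cycle and $C_n^*\cong C_n$, giving $\gamma_h(C_n)=\gamma(C_n)=\lceil n/3\rceil$.

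The routine bookkeeping is the floor/ceiling arithmetic and the invocation of $\gamma(P_m)=\gamma(C_m)=\lceil m/3\rceil$. The one genuine subtlety, and the place I would be most careful, is the smallest cycles, where the ``distance equals index-difference'' description breaks down: in $C_3$ every pair of vertices is adjacent, so $C_3^*$ is edgeless and $\gamma_h(C_3)=3\ne\lceil 3/3\rceil$, which means the odd formula is valid only for $n\ge 5$ (and $C_3$ is the unique cycle containing a triangle). Likewise $C_4^*=2K_2$ rather than a pair of honest cycles. Once these degenerate cases are isolated, the whole argument is a structural identification of $Dist(P_n:2)$ and $Dist(C_n:2)$ followed by a direct application of the known path and cycle domination numbers.
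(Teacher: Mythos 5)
Your argument is exactly the paper's: it passes to $Dist(G:2)$ via Proposition~\ref{prop:G*}(i), identifies $P_n^*$ as the disjoint union of paths on $\lceil n/2\rceil$ and $\lfloor n/2\rfloor$ vertices and $C_n^*$ as two $(n/2)$-cycles or a single $n$-cycle according to parity, and then applies $\gamma(P_m)=\gamma(C_m)=\lceil m/3\rceil$. Your extra care with the degenerate cases is a genuine (if minor) improvement: you are right that $\gamma_h(C_3)=3$, so the odd-cycle formula should really be restricted to $n\ge 5$ — harmless for the paper, which only applies it to triangle-free graphs, but worth recording.
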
 

By Theorem~\ref{pathcyclehop} and Proposition~\ref{obs:two_cycles}(ii), we obtain the following corollary, and the equality of the upper bounds on $\gamma_h(G)$ can be attained when $G$ is isomorphic to a path or a cycle. So the upper bounds on $\gamma_h(G)$ are best possible in this sense. 

\begin{corollary}\label{newcor}
Let $G$ be a triangle-free graph of order $n$.  
\begin{description}
    \item{\rm{(i)}} If $G$ contains a Hamiltonian path, then $\gamma_h(G)\leq \displaystyle\lceil\frac{\lfloor n/2\rfloor}{3}\rceil+\lceil\frac{\lceil n/2\rceil}{3}\rceil$.
        \item{\rm{(ii)}} If $G$ contains a Hamiltonian cycle, then
\begin{eqnarray*}\gamma_h(G)&\leq& \begin{cases}
   2\lceil\frac{n}{6}\rceil&\text{even }n,\\
   \lceil\frac{n}{3}\rceil&\text{odd }n.
\end{cases}
\end{eqnarray*}
\end{description}
\end{corollary}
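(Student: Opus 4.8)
The plan is to derive Corollary~\ref{newcor} directly from Theorem~\ref{pathcyclehop} together with the monotonicity statement in Proposition~\ref{obs:two_cycles}(ii). The key observation is that both hypotheses hand us a spanning subgraph of $G$ whose hop domination number we already know exactly: if $G$ contains a Hamiltonian path, then $P_n$ is a spanning subgraph of $G$, and if $G$ contains a Hamiltonian cycle, then $C_n$ is a spanning subgraph of $G$. Since $G$ is triangle-free, Proposition~\ref{obs:two_cycles}(ii) applies and gives $\gamma_h(G)\le\gamma_h(P_n)$ in case (i) and $\gamma_h(G)\le\gamma_h(C_n)$ in case (ii).

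First I would verify the applicability of Proposition~\ref{obs:two_cycles}(ii): it requires $H$ to be a spanning subgraph of the triangle-free graph $G$, and a Hamiltonian path (respectively cycle) of $G$ uses all $n$ vertices, so the subgraph $P_n$ (respectively $C_n$) it induces on the edge set of the path (respectively cycle) is indeed spanning. Then for part (i), substituting the explicit value $\gamma_h(P_n)=\lceil\lfloor n/2\rfloor/3\rceil+\lceil\lceil n/2\rceil/3\rceil$ from Theorem~\ref{pathcyclehop}(i) into $\gamma_h(G)\le\gamma_h(P_n)$ yields the claimed bound immediately. For part (ii), substituting the case-split value of $\gamma_h(C_n)$ from Theorem~\ref{pathcyclehop}(ii) into $\gamma_h(G)\le\gamma_h(C_n)$ yields the two-case bound.

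For the sharpness claim, I would note that when $G$ is itself a path $P_n$ (which trivially contains a Hamiltonian path), equality holds in part (i) by Theorem~\ref{pathcyclehop}(i); and when $G$ is itself a cycle $C_n$ (which trivially contains a Hamiltonian cycle), equality holds in part (ii) by Theorem~\ref{pathcyclehop}(ii). This shows the upper bounds cannot be improved as stated.

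Since the heavy lifting has already been done in establishing Theorem~\ref{pathcyclehop} (via Proposition~\ref{prop:G*} and Theorem~\ref{pathcycle}) and in Proposition~\ref{obs:two_cycles}(ii), there is no genuine obstacle here; the corollary is essentially a direct application. The only point requiring a moment's care is confirming that triangle-freeness is exactly the hypothesis needed for Proposition~\ref{obs:two_cycles}(ii) to guarantee that a $2$-step neighbor relation surviving in the spanning subgraph persists in $G$ — which is precisely why the corollary is stated for triangle-free $G$ rather than arbitrary graphs.
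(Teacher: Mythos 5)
Your proposal is correct and matches the paper's argument exactly: the paper derives Corollary~\ref{newcor} precisely by combining Theorem~\ref{pathcyclehop} with Proposition~\ref{obs:two_cycles}(ii), using the Hamiltonian path or cycle as a spanning subgraph. Your additional remarks on sharpness and on the role of triangle-freeness are consistent with the paper's surrounding discussion.
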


Here, we gather several inequalities and useful observations that will be used in the proofs. By Theorem~\ref{pathcyclehop}(i),
for a positive integer $n$, 
\begin{eqnarray}\label{eq:Pn} \gamma_h(P_n)&\le& \begin{cases}
   \frac{2n+6}{5}&\text{if }n=2,\\
   \frac{2n+4}{5}&\text{if }n\in\{1,3,8\},\\
\frac{2n+2}{5}&\text{if } n\in \{4,7,9,14\},\\
 \frac{2n}{5} &\text{otherwise}.\\
\end{cases}
\end{eqnarray}

\begin{proposition}\label{prop:C:two:adjacent}
For $n\ge 4$, it holds that
\begin{eqnarray}\label{eq:Cn} 
\gamma_h(C_n)&\le & \begin{cases}
   \frac{2n+2}{5} &\text{if }n\in\{4,7,14\},\\
   \frac{2n+4}{5} &\text{if }n=8,\\
   \frac{2n}{5} &\text{otherwise}.
\end{cases} \end{eqnarray}
Moreover, if $n\neq 8$, then there is a hop dominating set $S$ of $C_n$ such that $S$ contains two adjacent vertices and $|S|\le \frac{2n+2}{5}$.
\end{proposition}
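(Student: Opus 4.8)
The plan is to split the statement into its two assertions and treat the numerical bound first, then the refined construction.

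For the displayed inequality I would simply read off the exact values from Theorem~\ref{pathcyclehop}(ii) and compare them with $\frac{2n}{5}$ case by case. For odd $n$ we have $\gamma_h(C_n)=\lceil n/3\rceil$, and writing $\lceil n/3\rceil$ as $\frac{n}{3}$, $\frac{n+1}{3}$ or $\frac{n+2}{3}$ according to $n\bmod 3$, one checks $\lceil n/3\rceil\le\frac{2n}{5}$ for every odd $n$ except $n=7$, where $\lceil 7/3\rceil=3\le\frac{2\cdot 7+2}{5}$. For even $n$ we have $\gamma_h(C_n)=2\lceil n/6\rceil$; splitting on $n\bmod 6$ gives $2\lceil n/6\rceil\le\frac{2n}{5}$ for all even $n\notin\{4,8,14\}$ (the residues $n\equiv 2,4\pmod 6$ being the delicate ones), while $n=8$ needs $\frac{2n+4}{5}$ and $n\in\{4,14\}$ need $\frac{2n+2}{5}$. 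This is a finite, routine verification.

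For the ``moreover'' part the key idea is a reduction to a path. Write $C_n=v_0v_1\cdots v_{n-1}$ and place the adjacent pair $v_0,v_1$ into $S$. Their $2$-step neighbors are $v_{n-2},v_2$ and $v_{n-1},v_3$ respectively, so $\{v_0,v_1\}$ already hop-dominates the six consecutive vertices $v_{n-2},v_{n-1},v_0,v_1,v_2,v_3$. It then remains to hop-dominate $v_4,\dots,v_{n-3}$, which induce a path $P_{n-6}$; a vertex $v_j$ with $4\le j\le n-3$ has both $2$-step neighbors $v_{j\pm 2}$ inside this segment (its outer neighbors $v_2,v_{n-1}$ are not in $S$), so hop-dominating the segment is exactly the problem of hop-dominating $P_{n-6}$. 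Taking a minimum hop dominating set $S'$ of that path and setting $S=\{v_0,v_1\}\cup S'$ yields a hop dominating set of $C_n$ that contains the adjacent pair $v_0,v_1$ and has $|S|=2+\gamma_h(P_{n-6})$. To bound this, note from \eqref{eq:Pn} that $\gamma_h(P_m)\le\frac{2m+4}{5}$ for every $m\ge 1$ with $m\ne 2$ (only $m=2$ forces the larger $\frac{2m+6}{5}$), so for $n\ge 7$ with $n\ne 8$ (i.e. $m=n-6\ge 1$ and $m\ne 2$),
$$|S|=2+\gamma_h(P_{n-6})\le 2+\frac{2(n-6)+4}{5}=\frac{2n+2}{5},$$
as desired. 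The remaining cases $n\in\{4,5,6\}$ are handled directly: there $\{v_0,v_1\}$ by itself already hop-dominates all of $C_n$, and has size $2\le\frac{2n+2}{5}$.

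The main obstacle is not any single hard step but the bookkeeping: one must confirm that the $+4$ correction coming from the path estimate combines with the two ``free'' vertices $v_0,v_1$ to land exactly on $\frac{2n+2}{5}$, and that $n=8$ is precisely the case where the leftover path $P_2$ forces the $+6$ correction, so the argument (correctly) breaks there. Some care is also needed at the two ends of the segment, to verify that no vertex of $P_{n-6}$ secretly relies on $v_0$ or $v_1$ for its hop-domination, and to check the degenerate small cases $n\in\{4,5,6\}$ by hand.
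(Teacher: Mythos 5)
Your verification of the displayed inequality is essentially the paper's (the paper reads the values off Theorem~\ref{pathcyclehop}(ii) for $n\le 14$ and uses $\gamma_h(C_n)\le\gamma_h(P_n)$ for $n\ge 15$, while you do the residue analysis directly; both are routine and correct). For the ``moreover'' part, however, your construction is genuinely different from the paper's. The paper splits by parity: for even $n$ it observes that $Dist(C_n:2)$ is a disjoint union of two cycles of length $n/2$, one on each parity class of indices, so minimum dominating sets of the two pieces can be chosen to contain the adjacent pair $v_0,v_1$; for odd $n$ it exhibits the explicit set $\{v_i: i\equiv 1,2 \pmod 6\}$ and checks $|S|\le\frac{n+4}{3}\le\frac{2n+2}{5}$ in a short case analysis. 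You instead give a uniform reduction: seed $S$ with the adjacent pair $v_0,v_1$, note that this pair already hop-dominates $v_{n-2},v_{n-1},v_2,v_3$, and cover the remaining segment $v_4,\dots,v_{n-3}$ by a minimum hop dominating set of $P_{n-6}$, invoking \eqref{eq:Pn}. This is correct: two segment vertices at distance $2$ along the path are still at distance exactly $2$ in $C_n$ for $n\ge 4$, so the path solution transfers, and the arithmetic $2+\frac{2(n-6)+4}{5}=\frac{2n+2}{5}$ works for all $n\ge 7$ with $n-6\ne 2$, with $n\in\{4,5,6\}$ checked by hand. (Your parenthetical about $v_4$ having ``both'' $2$-step neighbors inside the segment is loosely worded --- $v_2$ lies outside it --- but only sufficiency of $S'$ is needed, so nothing breaks.) Your route treats both parities at once and makes transparent why $n=8$ is the unique exception (the leftover path is $P_2$, the one length where \eqref{eq:Pn} needs the $+6$ correction); the paper's even case, by contrast, produces a \emph{minimum} hop dominating set containing two adjacent vertices, which is slightly stronger information, though the proposition as stated does not require it.
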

\begin{proof}
By Proposition~\ref{obs:two_cycles}(ii) and \eqref{eq:Pn}, we have $\gamma_h(C_n)\le \gamma_h(P_n)\le \frac{2n}{5}$ if $n\ge 15$.
If  $n\le 14$,  Theorem~\ref{pathcyclehop}(ii) gives Table 1 for $\gamma_h(C_n)$. Thus \eqref{eq:Cn} holds.

\begin{table}[h!]\label{table1}
\centering\begin{tabular}
{c||c|c|c|c|c|c|c|c|c|c|c|c}
$n$&4 &5&6 &7&8&9&10&11&12&13&14\\ \hline
$\left\lfloor\frac{2n}{5}\right\rfloor$&1 &2&2 &2&3&3&4&4&4&5&5\\ \hline
$\gamma_h(C_n) $ &2&2&2&3&4&3&4&4&4&5&6\\
\end{tabular}
\caption{$\gamma_h(C_n)$ for small $n$}
\end{table}

If $n$ is even, then it is clear that 
{the union of dominating sets of two disjoint cycles of length $\frac{n}{2}$ is
a minimum hop dominating set $S$ of $G$,} and so there is a minimum hop dominating set $S$ of $C_n$ such that $S$ has two adjacent vertices. Hence the 'moreover' part is true when $n$ is even.

Suppose that $n$ is odd. Let $C_n: v_1v_2\cdots v_nv_1$ and $S= \{ v_i\mid i\equiv 1,2\pmod 6 \}$.
Clearly $S$ contains two adjacent vertices, and $|S|\le \frac{n+4}{3}$. Also, $S$ is a hop dominating set of $C_n$.
If $n\ge 15$, then $|S|\le \frac{n+4}{3} \le \frac{2n+2}{5}$. 
If $n\in \{7,13\}$, then $|S|=\frac{2(n-1)}{6}+1 = \frac{n+2}{3}\le \frac{2n+2}{5}$.
If $n\in\{5,11\}$, then   $|S|=\frac{2(n+1)}{6} = \frac{n+1}{3}\le \frac{2n+2}{5}$.
If $n=9$, then $|S|=4\le \frac{2n+2}{5}$. 
Thus the 'moreover' part is also true when $n$ is odd.
\end{proof}

For positive integers $a_1, \ldots, a_m$ with $3\le a_1\le \cdots \le a_m$, let $C(a_1,\ldots,a_m)$ be a graph obtained from vertex disjoint cycles $C_{a_1}$, $\ldots$, $C_{a_m}$ by identifying one vertex from each cycle. 
Note that $C(a_1)$ is a cycle of length $a_1$. See Figure~\ref{ckl} for an illustration.

\begin{figure}[h!]
    \centering
    \includegraphics[width=15cm]{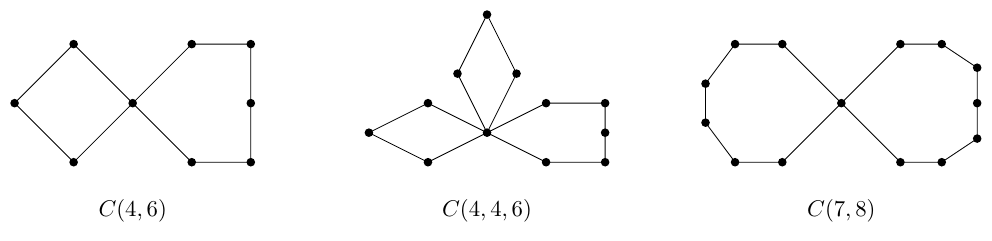}
    \caption{The graphs $C(4,6)$, $C(4,4,6)$, and $C(7,8)$}
    \label{ckl}
\end{figure}
\begin{proposition}\label{obs:ckl} 
For a triangle-free graph $G$ with $n$ vertices, if $G=C(a_1,\ldots,a_m)$ and $G\not\in\{C_4,C_7,C_8,C_{14}\}$, then 
$\gamma_h(G)\le \frac{2n}{5}$. Moreover, when $v$ 
is a common vertex of all cycles of $G$, there is a hop dominating set $S$ of $G$ such that $|S|\le \frac{2n}{5}$ and $v\in S$.
\end{proposition}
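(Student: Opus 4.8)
The plan is to pass to the distance-two graph and build a dominating set of it that is forced to contain $v$. By Proposition~\ref{prop:G*}, $\gamma_h(G)=\gamma(G^*)$ with $G^*=Dist(G:2)$, so it suffices to exhibit a dominating set of $G^*$ containing $v$ of size at most $\frac{2n}{5}$. Two structural facts about $G^*$ drive everything. First, since triangle-freeness forces every $a_i\ge 4$, and since the cycles meet only at $v$, no path through $v$ can shorten an intra-cycle distance; hence the distance-two pairs inside a fixed $C^{(i)}$ are exactly those of the isolated cycle $C_{a_i}$, so any hop dominating set of $C_{a_i}$ still hop-dominates those vertices inside $G$. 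Second, the $2m$ neighbors of $v$ are pairwise at distance exactly $2$ (common neighbor $v$) and pairwise non-adjacent, so they form a clique in $G^*$; placing a single neighbor of $v$ in our set dominates all of them simultaneously. I write $n=1+\sum_i(a_i-1)$ throughout.

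First I would record the basic per-cycle construction. A cycle is vertex-transitive, so for each $i$ there is a minimum hop dominating set of $C_{a_i}$ containing $v$; deleting $v$ gives $T_i$ with $|T_i|=\gamma_h(C_{a_i})-1$. By the first structural fact, $S_0=\{v\}\cup\bigcup_iT_i$ hop-dominates $G$ and contains $v$, with $|S_0|\le 1+\sum_i(\gamma_h(C_{a_i})-1)$. Setting $d_i=\frac{2a_i+3}{5}-\gamma_h(C_{a_i})$, a short computation using the cycle bounds of Proposition~\ref{prop:C:two:adjacent} shows that $\frac{2n}{5}-|S_0|\ge \sum_i d_i-\frac35$, so $S_0$ suffices whenever $\sum_i d_i\ge \frac35$. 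Every length contributes $d_i\ge 0$ except length $8$, where $\gamma_h(C_8)=4$ forces $d_i=-\frac15$; moreover $d_i\ge\frac35$ for all non-exceptional lengths and $d_i\in\{\tfrac15,\tfrac25\}$ for lengths $4,14$ and $7$. In particular the case $m=1$ is immediate (a good single cycle has $\gamma_h(C_{a_1})\le\frac{2a_1}{5}$, while the four forbidden single cycles are exactly the hypothesis exclusions), and the only bouquets with no $C_8$ that escape the criterion are $C(4,4)$, $C(4,14)$, $C(14,14)$; these three I would verify by hand, each satisfying the bound precisely because the clique on $N_G(v)$ collapses the cost (for instance $\gamma_h(C(4,4))=2$, attained by $v$ together with one neighbor of $v$).

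The crux is the length-$8$ cycles; let $p$ be their number. If $p\ge 2$, I keep $T_i$ for the non-$C_8$ cycles but treat the $C_8$'s through the clique: add $v$ and one neighbor $u^*$ of $v$, so that $v$ hop-dominates the two cycle-distance-two vertices of each $C_8$ and $u^*$ hop-dominates all of $N_G(v)$ at once; then only the three \emph{interior} vertices of each $C_8$ survive, and two vertices per $C_8$ cover them. The accounting yields $\frac{2n}{5}-|S|\ge\frac{4(p-2)}{5}\ge 0$. If $p=1$, there is at least one non-$C_8$ cycle (as $m\ge 2$); I take its hop dominating set to contain two adjacent vertices via the ``moreover'' part of Proposition~\ref{prop:C:two:adjacent} (applicable since its length is not $8$), rotated so those two vertices are $v$ and a neighbor $u^*$ of $v$, handle the other cycles per-cycle, and cover only the interior of the unique $C_8$; the clique then pays for that $C_8$'s two neighbors of $v$ for free, and the count leaves slack at least $\frac25$. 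In all cases the resulting set contains $v$, as required.

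The main obstacle is entirely concentrated in the length-$8$ cycles: because $\gamma_h(C_8)=4>\frac{16}{5}$, a $C_8$ is over budget on its own, so every bit of its saving must be extracted from sharing $v$ and from the clique on $N_G(v)$. The delicate part is the bookkeeping that decides, configuration by configuration, whether a neighbor of $v$ is already present ``for free'' (when $p\ge 2$) or must be purchased from a non-$C_8$ cycle through the two-adjacent-vertices refinement (when $p=1$), and verifying that the interiors of the short cycles can always be covered within the residual budget. The three genuinely small bouquets $C(4,4)$, $C(4,14)$, $C(14,14)$ are the only inputs that evade all the asymptotic estimates and must be checked by direct computation.
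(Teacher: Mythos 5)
Your argument is correct, but it takes a genuinely different route from the paper. The paper proves the ``moreover'' part by induction on $m$: it peels off one non-$8$-cycle $C'$ using the two-adjacent-vertices refinement of Proposition~\ref{prop:C:two:adjacent}, applies the induction hypothesis to the residual bouquet $H$, and handles separately the cases where $H\in\{C_4,C_7,C_8,C_{14}\}$ (forcing $m=2$) and where every cycle is an $8$-cycle (an explicit set of size $2m+2$). You instead give a single global construction -- per-cycle minimum hop dominating sets through $v$, plus one neighbor $u^*$ of $v$ exploiting the clique that $N_G(v)$ forms in $Dist(G:2)$, plus two interior vertices per $8$-cycle -- and verify it by a discharging computation over the cycle lengths via the weights $d_i=\frac{2a_i+3}{5}-\gamma_h(C_{a_i})$. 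The two proofs rest on the same two ingredients (the two-adjacent-vertices refinement and a bespoke treatment of $C_8$, the unique length with $\gamma_h(C_{a_i})>\frac{2a_i}{5}$), but your version is non-inductive and makes the arithmetic slack fully explicit, at the cost of having to isolate and hand-check the three exceptional bouquets $C(4,4)$, $C(4,14)$, $C(14,14)$ that evade the weight criterion; I confirmed all three do satisfy the bound (e.g.\ $C(14,14)$ admits a hop dominating set of size $10\le\frac{54}{5}$ containing $v$, taking $v$, one neighbor $u_1$ of $v$, and four suitably spaced vertices on the interior of each $14$-cycle), so no gap remains. The paper's induction absorbs these small cases into the induction step instead, which is why it never needs to name them.
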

\begin{proof}  It is sufficient to show the `moreover' part.  We will show by induction on $m$. 
If $m=1$ then \eqref{eq:Cn} implies that the moreover part is true. Suppose that $m\ge 2$. Suppose that all cycles of $G$ are $8$-cycles, that is, $G=C(8,8,\ldots,8)$. Then $|V(G)|=7m+1$. Let $W$ be the set of vertices of $G$ that have distance four from $v$. 
By the construction, $|W|=m$.
Take a neighbor $v'$ of $v$.
For each vertex $x\in W$, we take one neighbor $x'$ of $x$, and then let $W'$ be the set of such vertices $x'$.
Then $S=W\cup W'\cup\{v,v'\}$ is a hop dominating set $S$ of $G$ such that $|S|=2m+2 \le \frac{2n}{5}$ and $S$ contains $v$. 

Now suppose that there is a cycle $C'$ of $G$ that is not an $8$-cycle. Let $v'$ be a neighbor of $v$ on the cycle $C'$. By the `moreover' part of Proposition~\ref{prop:C:two:adjacent}, we can take a hop dominating set $S'$ for $C'$ such that $S'$ contains both $v$ and $v'$ and $|S'|\le \frac{2|V(C')|+2}{5}$.
Let $H$ be the graph obtained from $G$ by deleting $V(C')\setminus\{v\}$. Note that $|V(H)|+|V(C')|-1=n$.
We also let $P$ be the graph obtained from $H$ by deleting $v$, the neighbors of $v$, and the $2$-step neighbors of $v$. Note that $P$ is a union of vertex-disjoint paths. Take a minimum hop dominating set $T$ of this graph $P$. 
Then $T\cup S'$ is a hop dominating set of $G$ containing the vertex $v$.

Suppose that $H\in \{C_4,C_7,C_8,C_{14}\}$. Then $m=2$.  
If $H=C_4$, then $S'$ is also a hop dominating set of $G$ and so 
$\gamma_h(G)\le |S'|= \frac{2|V(C')|+2}{5} \le \frac{2n}{5}$.
Suppose that $H\neq C_4$. Then $P$ is a path on $|V(H)|-5$ vertices.
If $H=C_7$, then  
$|T|=2$ by \eqref{eq:Pn} and therefore, 
$\gamma_h(G)\le |S'|+|T|=|S'|+2 \le \frac{2|V(C')|+12}{5} = \frac{2n}{5}$.
If $H\in\{C_8,C_{14}\}$, then  $\gamma_h(P)=|T|\le \frac{2|V(P)|+4}{5}$ by \eqref{eq:Pn}, and
so \[\gamma_h(G)\le |T|+|S'|\le 
\frac{2|V(P)|+4}{5} + \frac{2|V(C')|+4}{5} =\frac{2n}{5}.\] 
Now suppose that $H\not\in \{C_4,C_7,C_8,C_{14}\}$. 
By the induction hypothesis, $H$ has a hop dominating set $S$ of $G$ such that $v\in S$ and $|S|\le \frac{2|V(H)|}{5}$. 
Then $S\cup S'$ is a hop dominating set of $G$ containing {the vertex} $v$, which implies that
\[\gamma_h(G)\le |S|+|S'|-1\le \frac{2|V(H)|}{5}+\frac{2|V(C')|+2}{5}-1 \le \frac{2n}{5}.\] 
\end{proof}

A {\it pendent $k$-cycle} of $G$ is an induced cycle $v_1v_2 \cdots v_kv_1$ of $G$ such that $\deg_G(v_1)\ge 3$ and $\deg_G(v_i)=2$ for each $2\le i\le k$. We also call it a pendent $k$-cycle at the vertex $v_1$.

\begin{proposition}\label{lemma:pendent_cycle}
Any connected graph has a hop dominating set $S$ of a graph $G$ with $|S|=\gamma_h(G)$ such that for every a pendent $4$-cycle $v_1v_2v_3v_4v_1$ with $\deg_G(v_1)\ge 3$,   $v_1\in S$ and $N_G(v_1) \cap S\neq \emptyset$.
\end{proposition}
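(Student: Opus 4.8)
The plan is to isolate the local structure of a pendent $4$-cycle and then reduce both requirements to a single normalization step. Fix a pendent $4$-cycle $v_1v_2v_3v_4v_1$ with $\deg_G(v_1)\ge 3$, and write $A=N_G(v_1)\setminus\{v_2,v_4\}$, which is nonempty. Since $v_2,v_3,v_4$ all have degree $2$ and the cycle is induced, I would first record the two facts that drive everything. First, $N_2(v_3;G)=\{v_1\}$, because the only neighbors of $v_3$ are $v_2,v_4$ and each of these has $v_1$ as its unique neighbor other than $v_3$. Second, $N_2(v_2;G)=\{v_4\}\cup A$ (and symmetrically $N_2(v_4;G)=\{v_2\}\cup A$), since the only vertices at distance exactly $2$ from $v_2$ are the neighbors of $v_1$ other than $v_2$, namely $v_4$ and the vertices of $A$; in particular $N_2(v_2;G)\subseteq N_G(v_1)$.

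The second requirement, $N_G(v_1)\cap S\neq\emptyset$, is then automatic for \emph{every} hop dominating set $S$: if $v_2\in S$ we are done since $v_2\in N_G(v_1)$, and if $v_2\notin S$ then $v_2$ has a $2$-step neighbor in $S$, which lies in $N_2(v_2;G)\subseteq N_G(v_1)$. So the entire content of the proposition is to arrange $v_1\in S$ simultaneously for all pendent $4$-cycles while keeping $|S|=\gamma_h(G)$.

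For this I would use an extremal choice: among all hop dominating sets of size $\gamma_h(G)$, take $S$ maximizing the number of pendent $4$-cycles whose attachment vertex lies in $S$, and argue this maximum forces every attachment vertex into $S$. Suppose some pendent $4$-cycle has $v_1\notin S$. Since $v_3$ must be hop-dominated and $N_2(v_3;G)=\{v_1\}$, we must have $v_3\in S$. Now perform the swap $S':=(S\setminus\{v_3\})\cup\{v_1\}$. The key point is that deleting $v_3$ is lossless: the only vertex that $v_3$ can $2$-step dominate is $v_1$, which now lies in $S'$, and $v_3$ itself is hop-dominated by $v_1\in S'$; adjoining $v_1$ can only help the remaining vertices. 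Hence $S'$ is again a hop dominating set, and $|S'|=|S|=\gamma_h(G)$ since $v_1\notin S$ and $v_3\in S$ are distinct.

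Finally I would check that $S'$ strictly increases the extremal quantity, giving a contradiction. The swap deletes only the degree-$2$ vertex $v_3$, which cannot be the attachment vertex of any pendent $4$-cycle (those have degree $\ge 3$), so no cycle loses its attachment; meanwhile the chosen cycle gains $v_1\in S'$. Thus the count strictly increases, contradicting the maximality of $S$. Consequently $v_1\in S$ holds for every pendent $4$-cycle, and together with the automatic second requirement, $S$ is the desired minimum hop dominating set. I expect the main obstacle to be exactly this bookkeeping: one must make sure that the vertices removed by the normalization (always degree $2$) are disjoint from all attachment vertices (degree $\ge 3$), so that fixing one pendent cycle never destroys the property on another. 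Phrasing the argument extremally is what lets me avoid performing all swaps at once and instead derive the conclusion from a single swap.
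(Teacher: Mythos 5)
Your proposal is correct and uses essentially the same argument as the paper: force $v_3\in S$ from $N_2(v_3;G)=\{v_1\}$, swap $v_3$ for $v_1$, and observe that $N_G(v_1)\cap S\neq\emptyset$ comes for free from $N_2(v_2;G)\subseteq N_G(v_1)$. Your extremal formulation (maximizing the number of pendent $4$-cycles whose attachment vertex lies in $S$, and noting the deleted vertex $v_3$ has degree $2$ and so is never an attachment vertex) is a slightly more careful treatment of the ``for every pendent $4$-cycle'' quantifier than the paper's single-swap presentation, but the underlying idea is identical.
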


\begin{proof} Take a minimum hop dominating set $S$ of $G$ such that $|S|=\gamma_h(G)$. Suppose that $v_1\not\in S$ for some  pendent $4$-cycle $v_1v_2v_3v_4v_1$ of $G$.
Since $v_1$ is the only vertex which has distance exactly two from the vertex $v_3$, it follows that $v_3\in S$.
Then we let $T=(S\setminus\{v_3\}) \cup \{v_1\}$, and then $T$ is a hop dominating set of $G$ such that $|T|=\gamma_h(G)$ and $v_1\in T$.
Moreover, by considering $v_2$ and $v_3$, it follows that $N_G(v_1)\cap T\neq \emptyset$.  
\end{proof}

\section{Proof of Theorem~\ref{thm:min:degree}}
Recall that $\mathcal{B}=\{C_4,C_7, C_8, C_{14}, G_9, G_{14},G'_{14}\}$.
Suppose to the contrary that there is a  triangle-free graph {$G$} on $n$ vertices
such that $\delta(G)\ge 2$, $G\not\in\mathcal{B}$, and $\gamma_h(G)> \frac{2n}{5}$.
We choose such a graph $G$ so that 
\begin{itemize}
    \item[(1)] $n+|E(G)|$ is as small as possible, and subject to the condition (1),
    \item[(2)] the number of pendent $4$-cycles is as large as possible.
\end{itemize}
Then $G$ is a connected triangle-free graph with $\delta(G)\ge 2$, $G\not\in\mathcal{B}$, and $\gamma_h(G) > \frac{2n}{5}$, where $n=|V(G)|$.
By Proposition~\ref{obs:ckl}, $G \neq C(a_1, a_2,\ldots,a_m)$ and therefore $G$ has at least two  vertices of degree at least three.  
It is also easy to observe that $G$ is not a complete bipartite graph, since $\gamma_h(K_{s,t})\le 2$ for any positive integers $s,t$. {This also implies that $n\ge 6$.} 
Moreover, note that for every proper connected subgraph $H$ of $G$ such that $\delta(H)\ge 2$, either $H\in\mathcal{B}$ or $\gamma_h(H)\le \frac{2|V(H)|}{5}$ by the choice of $G$. From Proposition~\ref{obs:two_cycles} and \eqref{eq:Cn},   
it follows that $\gamma_h(H)\le \frac{2|V(H)|+2}{5}$ if $H\in \mathcal{B}\setminus \{ C_8\}$ and $\gamma_h(H)\le \frac{2|V(H)|+4}{5}$ if $H= C_8$. Hence, for every proper connected subgraph $H$ of $G$, 
\begin{eqnarray}\label{eq:general:H}
    \gamma_h(H) &\le&  \begin{cases}
        \frac{2|V(H)|+2}{5} & \text{if }H\neq C_8\\
        \frac{2|V(H)|+4}{5} & \text{if }H= C_8.
    \end{cases}
\end{eqnarray}

\begin{claim}\label{claim:cutedge}
For an edge $e=v_1v_2$ such that $\deg_G(v_i)\ge 3$ for each $i\in \{1,2\}$,
$e$ is a cut-edge of $G$ such that exactly one component of $G-e$ is either $C_4$ or $C_7$, and the other component of $G-e$ is not in $\mathcal{B}$.
\end{claim}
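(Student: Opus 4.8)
The plan is to use the minimality of $G$ through edge deletion and then analyse the two sides of the resulting cut. First I would show that $e$ is a cut-edge. Since $\deg_G(v_1),\deg_G(v_2)\ge 3$, deleting $e$ lowers only these two degrees by one, so $\delta(G-e)\ge 2$, and $G-e$ is still triangle-free. If $e$ were not a cut-edge, then $G-e$ would be a connected (spanning, hence proper) subgraph with $\delta\ge 2$, and Proposition~\ref{obs:two_cycles}(ii) would give $\gamma_h(G)\le\gamma_h(G-e)$. By the choice of $G$, either $\gamma_h(G-e)\le\frac{2n}{5}$, contradicting $\gamma_h(G)>\frac{2n}{5}$, or $G-e\in\mathcal B$. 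In the latter case $G$ has at most $14$ vertices and is obtained from one of the seven fixed graphs of $\mathcal B$ by adding a single edge, so it remains to dispose of finitely many explicit graphs: for each one, either the added edge creates a triangle, or $G\in\mathcal B$, or a direct computation gives $\gamma_h(G)\le\frac{2n}{5}$. Hence $e$ is a cut-edge.

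Write $G-e=A\sqcup B$ with $v_1\in A$ and $v_2\in B$. Each of $A,B$ is connected, triangle-free, has $\delta\ge 2$ (again only the cut-endpoints lose an edge), and is a proper subgraph of $G$, so the minimality of $G$ applies to both. Because $A$ and $B$ are induced subgraphs and every path between $A$ and $B$ must traverse the bridge $e$, distances inside each part are the same as in $G$; hence the union of a hop-dominating set of $A$ with one of $B$ hop-dominates $G$, giving $\gamma_h(G)\le\gamma_h(A)+\gamma_h(B)$. If both $A,B\notin\mathcal B$, then $\gamma_h(G)\le\frac{2|V(A)|}{5}+\frac{2|V(B)|}{5}=\frac{2n}{5}$, a contradiction. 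Therefore at least one of $A,B$ lies in $\mathcal B$.

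It remains to show that a part lying in $\mathcal B$ must be $C_4$ or $C_7$ and that both parts cannot lie in $\mathcal B$. The extra leverage is the cross-adjacency created by $e$: every neighbour of $v_1$ in $A$ is a $2$-step neighbour of $v_2$, and symmetrically. Thus, if the set chosen on one side contains a neighbour of its cut-endpoint, the opposite cut-endpoint is hop-dominated for free and need not be covered by the other side. Writing, for $X\in\mathcal B$ and a vertex $u$, $f(X,u)$ for the least size of a vertex set that hop-dominates $X-u$ and meets $N_X(u)$, a short case check (using the splittings of $Dist(\cdot:2)$ into cycles and paths together with the `moreover' clauses of Proposition~\ref{prop:C:two:adjacent} and Proposition~\ref{lemma:pendent_cycle}) yields the dichotomy $f(C_4,u)=2>\frac{8}{5}$ and $f(C_7,u)=3>\frac{14}{5}$ (no gain over $\gamma_h$), whereas $f(X,u)\le\frac{2|V(X)|}{5}$ for every other $X\in\mathcal B$ (for instance $f(C_8,u)=3$, $f(C_{14},u)=5$). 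Feeding $f(A,v_1)$ into a hop-dominating set of $B$ that also covers $v_1$, and using the symmetric estimate when both parts are in $\mathcal B$, then forces $\gamma_h(G)\le\frac{2n}{5}$ unless the unique $\mathcal B$-part is $C_4$ or $C_7$, which is the assertion.

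The main obstacle is precisely this last accounting. The per-part savings are only fractional, so to push $\gamma_h(G)$ down to $\frac{2n}{5}$ one must guarantee that the large part $B\notin\mathcal B$ admits a hop-dominating set of size at most $\frac{2|V(B)|}{5}$ that simultaneously meets $N_B(v_2)$, thereby covering $v_1$ across the cut. Establishing this boundary-aware, rooted strengthening of the bound, and tracking the residues of $|V(A)|$ and $|V(B)|$ modulo $5$ so that the fractional gains actually cross an integer threshold, is the delicate step; the strengthened estimates \eqref{eq:general:H} for the small graphs of $\mathcal B$ and the `moreover' clauses cited above are what keep the bookkeeping tight, with the finitely many configurations in which both parts lie in $\mathcal B$ verified directly.
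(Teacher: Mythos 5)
Your outline matches the paper's proof up to and including the reduction to ``at least one component of $G-e$ lies in $\mathcal{B}$'': delete $e$, note $\delta(G-e)\ge 2$, invoke Proposition~\ref{obs:two_cycles}(ii) and the minimality of $G$, and dispose of the finitely many graphs obtained by adding one edge to a member of $\mathcal{B}$ in case $e$ is not a cut-edge. The gap is in the final step, and you have correctly located it yourself: to rule out $D_2\in\{C_8,C_{14},G_9,G_{14},G'_{14}\}$ you need the component $D_1\notin\mathcal{B}$ to admit a hop dominating set of size at most roughly $\frac{2|V(D_1)|}{5}$ that contains $v_1$ \emph{and} meets $N_{G}(v_1)\cap V(D_1)$, so that $v_2$ and its neighbours are dominated across the bridge. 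None of the tools you cite delivers this rooted bound: inequality \eqref{eq:general:H} is not rooted, the ``moreover'' clause of Proposition~\ref{prop:C:two:adjacent} applies only to cycles, and Proposition~\ref{lemma:pendent_cycle} forces $v_1$ into a minimum hop dominating set only when $v_1$ already carries a pendent $4$-cycle, which it need not. Without the rooted estimate the naive accounting gives only $\frac{2|V(D_1)|}{5}+\gamma_h(D_2)$, which for $D_2=C_8$ is $\frac{2(n-8)}{5}+4=\frac{2n+4}{5}$, too large.

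The paper's device, absent from your proposal, is to \emph{manufacture} the pendent $4$-cycle: form $H$ from $D_1$ by attaching a pendent $4$-cycle $v_1w_2w_3w_4v_1$ at $v_1$. Then $H$ is connected, triangle-free, has $\delta(H)\ge 2$, lies outside $\mathcal{B}$, and satisfies $|V(H)|+|E(H)|<|V(G)|+|E(G)|$ because the deleted component $D_2$ contributes more vertices and edges than the attached $4$-cycle; so the minimality of $G$ under (1) gives $\gamma_h(H)\le\frac{2(|V(D_1)|+3)}{5}$, and Proposition~\ref{lemma:pendent_cycle} upgrades this to a set containing $v_1$ and a neighbour of $v_1$. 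Discarding or reassigning the auxiliary vertices and adding a hop dominating set of the path obtained from a Hamiltonian cycle of $D_2$ by deleting $N[v_2]$ then lands exactly on $\frac{2n}{5}$ (a slightly different surgery handles $D_2=G_9$, and the case where both components lie in $\{C_4,C_7\}$ is the finite check you describe). This augmentation trick is also why the extremal choice (2), maximizing the number of pendent $4$-cycles, appears in the setup. So the skeleton of your argument is right, but the rooted estimate on which everything hinges is asserted rather than proved, and the construction needed to prove it is the substantive missing idea.
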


\begin{proof}
Note that  $\gamma_h(G)\le \gamma_h(G-e)$ by Proposition~\ref{obs:two_cycles} (ii), and 
 $\delta(G-e)\ge 2$ by assumption. 
If $G-e$ has no component in $\mathcal{B}$ then by the minimality of $G$ on (1), $\gamma_h(G)\le \gamma_h(G-e)\le \frac{2n}{5}$, a contradiction. 
Thus $G-e$ has a component in $\mathcal{B}$. 

\begin{subclaim}\label{subclaim:cutedge}
$e$ is a cut edge.   
\end{subclaim}
\begin{proof} Suppose to the contrary that $e$ is not a cut-edge. 
Then $G-e\in \mathcal{B}$. 
Since $G$ is triangle-free, $G-e\neq C_4$.
If $G-e=G_9$, then we can take a cycle $C:v_1v_2\ldots v_8v_1$ of length $8$ and relabel the vertices of $C$ so that $e=v_1v_s$ and $s\in\{4,5\}$, which implies that $\{v_1,v_8\}\cup \{z\}$, where $z$ is the vertex in $(V(G)\setminus V(C))$, is a hop dominating set of $G$ of size three, which is a contradiction.
Thus $G-e\neq G_9$. 
Then $G-e\in \{C_7, C_8, C_{14}, G_{14},G'_{14}\}$, and so $G-e$ has a Hamiltonian cycle $C:v_1v_2\cdots v_nv_1$ and so we can relabel the vertices so that $e=v_1v_s$ and  $4\le s\le \left\lceil\frac{n+1}{2} \right\rceil$. 
Let $H$ be a spanning subgraph of $G$ such that
$H=C+e$.
By Proposition~\ref{obs:two_cycles} (ii), it is enough to show that $\gamma_h(H)\le\frac{2n}{5}$.
Note that $|V(C)|\in\{7,8,14\}$. 
Let 
\[ S=\begin{cases}
    \{v_1,v_n\} &\text{ if }n\in\{7,8\},\\
     \{v_1,v_2,v_6,v_{9},v_{10}\} &\text{ if }n=14\text{ and }s\in\{4,5\},\\   
    \{v_1,v_2,v_5,v_{9},v_{10}\} &\text{ if }n=14\text{ and }s\in\{6,7\},\\
 \{v\in V(G)\mid \deg_G(v)\ge 3\} &\text{ if }n=14\text{ and }s=8.
\end{cases}\]
Then we can easily check that $S$ is a hop dominating set of $H$ such that $|S|\le \frac{2n}{5}$, except for the case where $n=14$ and $s=8$.
Suppose that $n=14$ and $s=8$. Then $H=G_{14}$ and $e=v_1v_8$.
For the edge $e'\in E(H)\setminus E(C)$, we may assume that $C+e'$ is also equal to $G_{14}$ (otherwise we consider $G-e'$ as $H$). 
Since $G\neq G'_{14}$, without loss of generality, we may assume that $e'\in\{ v_3v_{10}, v_4v_{11}\}$.
Then the end vertices of $e$ and $e'$ form a hop dominating set of $G$. Then $\gamma_h(G)\le 4$, which is a contradiction.
\end{proof}

By Subclaim~\ref{subclaim:cutedge}, $e=v_1v_2$ is a cut-edge, and so let $D_1$ and $D_2$ be two components of $G-e$, say $v_i\in V(D_i)$ for each $i\in\{1,2\}$.

\begin{subclaim}\label{subclaim:c4c7}
If $D_i\in \mathcal{B}$, then  $D_i$ is either $C_4$ or $C_7$.
\end{subclaim}
\begin{proof}  Let $D_2\in\mathcal{B}$.
Suppose to the contrary that  $D_2\in \{C_8, C_{14}, G_9, G_{14},G'_{14}\}$.
First, suppose that $D_2\in \{C_8,C_{14},G_{14},G_{14}'\}$.
Note that $D_2$ has a Hamiltonian cycle $C$,   let $P$ be the path obtained from $C$ by deleting the vertices of $N_C[v_2]$, and $T$ be a minimum hop dominating set of $P$.

Let $H$ be the graph obtained from $D_1$ by attaching a pendent $4$-cycle $v_1w_2w_3w_4v_1$ at the vertex $v_1$.  Clearly, $H$ is  a triangle-free connected graph,  $H\not\in \mathcal{B}$, $|V(H)|+|E(H)|<|V(G)|+|E(G)|$ and $\delta(H)\ge 2$. 
By Proposition~\ref{lemma:pendent_cycle}, there is a hop dominating set $S$ of $H$ such that $v_1\in S$, $|S|=\gamma_h(H)$, $N_H(v_1)\cap S\neq\emptyset$.
By the minimality of $G$ on (1), and so $|S|\le \frac{2|V(H)|}{5}\le \frac{2n+6-2|V(D_2)|}{5}$. 
In addition,  $S^*=(S\setminus\{w_3\})\cup T$ is a hop dominating set of $G$. (If $S$ contains $w_2$ or $w_4$, then we replace $w_4$ with $v_2$, when we consider $S\cup T$.)  
If $D_2=C_8$, then $P$ is a path on 5 vertices and so $|T|\le 2$ and $|S|\le \frac{2n-10}{5}$. 
If $D_2\in \{C_{14}, G_{14},G_{14}\}$, then $P$ is a path on $11$ vertices, $|T|\le 4$ and $|S|\le \frac{2n-22}{5}$.
Thus, in any case, $\gamma_h(G)\le  |S\cup T|\le \frac{2n}{5}$, which is a contradiction.

Suppose that $D_2=G_9$. Let $H$ be the graph obtained from $G- (V(D_2)\setminus\{v_2\})$ by attaching a pendent $4$-cycle at the vertex $v_2$. Note that $H$ is a triangle-free connected graph,  $H\not\in \mathcal{B}$, $|V(H)|+|E(H)|<|V(G)|+|E(G)|$ and $\delta(H)\ge 2$. 
By Proposition~\ref{lemma:pendent_cycle}, there is a hop dominating set $S$ of $H$ such that $v_2\in S$, $N_H(v_2)\cap S\neq \emptyset$ and $|S|=\gamma_h(H)$.
Let $x\in N_H(v_2)\cap S$.  Then $(S\setminus\{x\}) \cup \{v_1,a,b\}$ is a hop dominating set of $G$, where $ab$ is an edge of $D_2$ farthest from $v_2$.
By the minimality of $G$ on (1), $|S|=\gamma_h(H)\le \frac{2|V(H)|}{5}$ and therefore $|(S\setminus\{x\}) \cup \{v_1,a,b\}|\le \frac{2(n-5)}{5}+2=\frac{2n}{5}$, which is a contradiction.  Thus $D_2\in \{C_4,C_7\}$. Hence, the subclalim holds.
\end{proof}
If $D_1,D_2\in \mathcal{B}$, then by Sublcaim~\ref{subclaim:c4c7}, $G$ is one of the graphs in Figure~\ref{fig:c4c7}. 
{It is easy to check that $\gamma_h(G)$ is at most $\left\lfloor \frac{2n}{5} \right\rfloor$.}
Thus, exactly one of $D_1$ and $D_2$ is not in $\mathcal{B}$ and so the claim holds from Subclaim~\ref{subclaim:c4c7}.
\begin{figure}[h!]
    \centering
    \includegraphics[width=16cm]{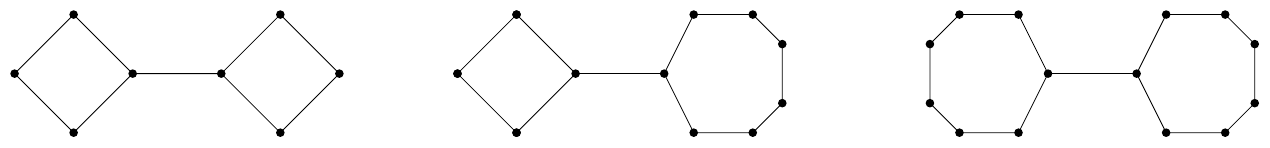}
    \caption{Some graphs}
    \label{fig:c4c7}
\end{figure}
\end{proof}

\begin{claim}\label{claim:path}
For a path $P: v_1v_2v_3v_4v_5v_6v_7$ in $G$ such that $\deg_G(v_i)=2$ for $2\le i\le 6$ and $\deg_G(v_1),\deg_G(v_7)\ge 3$, the graph $G-\{v_2,v_3,v_4,v_5,v_6\}$ is disconnected.
\end{claim}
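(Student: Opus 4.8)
The plan is to argue by contradiction. Suppose $G':=G-\{v_2,v_3,v_4,v_5,v_6\}$ is connected. Since $\deg_G(v_1),\deg_G(v_7)\ge 3$ and each of them loses exactly one neighbor ($v_2$, respectively $v_6$) while every other vertex keeps all of its neighbors, we have $\delta(G')\ge 2$; moreover $G'$ is a connected triangle-free graph on $n-5$ vertices with $|V(G')|+|E(G')|<n+|E(G)|$. The goal is to extend a small hop dominating set of $G'$ to one of $G$ and obtain $\gamma_h(G)\le\frac{2n}{5}$, contradicting the choice of $G$ under condition (1).

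The key local observation is that $\{v_3,v_4\}$ already hop-dominates all five deleted vertices. Because $G$ is triangle-free and $v_2,\dots,v_6$ have degree $2$, none of the edges $v_2v_4,v_3v_5,v_4v_6$ is present, so $v_4\in N_2(v_2;G)\cap N_2(v_6;G)$ and $v_3\in N_2(v_5;G)$, while $v_3,v_4$ themselves lie in the set. Now take any hop dominating set $S'$ of $G'$. By Proposition~\ref{obs:two_cycles} applied to the triangle-free supergraph $G\supseteq G'$, every $2$-step adjacency of $G'$ survives in $G$, so each vertex of $V(G')\setminus S'$ still has a $2$-step neighbor in $S'$ within $G$; combined with the previous sentence, $S'\cup\{v_3,v_4\}$ is a hop dominating set of $G$. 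Hence $\gamma_h(G)\le\gamma_h(G')+2$.

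If $G'\notin\mathcal{B}$, then $G'$ is not a counterexample, being connected, triangle-free, of minimum degree at least $2$, and strictly smaller than $G$ in the ordering (1); thus $\gamma_h(G')\le\frac{2(n-5)}{5}=\frac{2n}{5}-2$, whence $\gamma_h(G)\le\frac{2n}{5}$, the desired contradiction.

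The hard part is the remaining case $G'\in\mathcal{B}$, where \eqref{eq:general:H} only yields $\gamma_h(G')\le\frac{2(n-5)+2}{5}$ (or $\frac{2(n-5)+4}{5}$ if $G'=C_8$), so the crude estimate $\gamma_h(G)\le\gamma_h(G')+2$ gives merely $\frac{2n+2}{5}$ and must be sharpened. The saving feature is that every member of $\mathcal{B}$ has at most $14$ vertices, so $n\le 19$ and $G$ is obtained from one explicit graph $G'\in\mathcal{B}$ by joining two of its vertices $v_1,v_7$ through a path with five internal vertices. I would finish by inspecting these finitely many configurations: for each one I would either exhibit a hop dominating set of size at most $\lfloor\frac{2n}{5}\rfloor$ — typically by choosing a hop dominating set of $G'$ that contains \emph{both} $v_1$ and $v_7$, which lets the path be covered by the single vertex $v_4$ (since then $v_1\in N_2(v_3;G)$ and $v_7\in N_2(v_5;G)$), so that $\gamma_h(G)\le\gamma_h(G')+1<\frac{2n}{5}$ — or observe that the configuration forces $G$ itself to lie in $\mathcal{B}$ (for instance $G\cong G_9$ when $G'=C_4$), contradicting $G\notin\mathcal{B}$. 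Checking, for each graph in $\mathcal{B}$ and each admissible placement of $v_1,v_7$, that a hop dominating set of the required size containing $\{v_1,v_7\}$ exists (or that $G\in\mathcal{B}$) is the routine-but-tedious core of this case and is the main obstacle to a fully uniform argument.
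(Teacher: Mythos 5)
Your first reduction coincides with the paper's: delete the five internal vertices, observe that $\{v_3,v_4\}$ hop-dominates all of them, and invoke the minimality of $G$ when $G':=G-\{v_2,\dots,v_6\}\notin\mathcal{B}$. (One small point: you justify the survival of $2$-step adjacencies by citing Proposition~\ref{obs:two_cycles}, which concerns spanning subgraphs; what you actually need is that $G'$ is an \emph{induced} subgraph of $G$, so two vertices that are nonadjacent with a common neighbour in $G'$ remain so in $G$ --- true, but not what that proposition says.) The genuine gap is the case $G'\in\mathcal{B}$, which you explicitly defer as ``routine-but-tedious'' and never carry out. Your proposed repair --- produce a hop dominating set of $G'$ of minimum size containing \emph{both} $v_1$ and $v_7$, so that $v_4$ alone finishes the path --- is an existence claim that would have to be verified for each member of $\mathcal{B}$ and each admissible placement of $\{v_1,v_7\}$, and you do not do this; nor do you identify all placements for which $G$ itself lands in $\mathcal{B}$ (besides $G'=C_4\Rightarrow G\cong G_9$, taking $G'=G_9$ with $v_1,v_7$ the two degree-two vertices whose common neighbourhood is $\{w_2,w_8\}$ yields $G\cong G'_{14}$). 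So the proof is incomplete exactly where the work lies.

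For comparison, the paper closes this case with far less enumeration. First, Claim~\ref{claim:cutedge} forces $v_1v_7\notin E(G)$. If $G'\in\{C_7,C_8,C_{14},G_{14},G'_{14}\}$, then $G'$ has a Hamiltonian cycle, hence a Hamiltonian path starting at $v_7$; prefixing $v_2v_3v_4v_5v_6$ gives a Hamiltonian path of $G$ on $n\in\{12,13,19\}$ vertices, and Proposition~\ref{obs:two_cycles}(ii) together with \eqref{eq:Pn} gives $\gamma_h(G)\le\gamma_h(P_n)\le\frac{2n}{5}$ outright, with no need for a prescribed set containing $v_1$ and $v_7$. If $G'=C_4$ then $G=G_9\in\mathcal{B}$, and if $G'=G_9$ the paper locates two degree-two vertices with identical neighbourhoods and deletes one via Proposition~\ref{obs:two_cycles}(i), reducing to a smaller non-exceptional graph. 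If you want to complete your argument, adopting the Hamiltonian-path observation is the cleanest way to dispose of five of the seven members of $\mathcal{B}$ at once.
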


\begin{proof}
Let $H=G-\{v_2,v_3,v_4,v_5,v_6\}$. Note that  $\delta(H)\ge 2$. Suppose to the contrary that $H$ is connected. 
If  $H\not\in \mathcal{B}$,  then there exists a hop dominating set $S$ such that $|S|\le \frac{2|V(H)|}{5} =\frac{2n}{5}-2$  by the minimality of $G$ on (1). 
Then $S\cup \{ v_3,v_4\}$ is a hop dominating set of $G$ and has size at most $\frac{2n}{5}$, which is a contradiction. Thus $H\in \mathcal{B}$.
By Claim~\ref{claim:cutedge}, $v_1v_7$ is not an edge of $G$.
 
If $H=C_4$, then $G=G_9$, which is a contradiction. Therefore $H\in \mathcal{B}\setminus\{ C_4\}$.
If $H\in\{C_7,C_8,C_{14},G_{14},G'_{14}\}$, then $H$ has a Hamiltonian cycle and so $G$ has a Hamiltonian path $P$ on $n$ vertices, where $n\in\{12,13,19\}$, and therefore, $\gamma_h(G)\le \gamma_h(P)\le \frac{2n}{5}$ by Proposition~\ref{obs:two_cycles}~(ii) and \eqref{eq:Pn}.
Suppose that $H=G_9$. Then let $w_1w_2\cdots w_8w_1$ be a cycle of $H$ and $w'_1$ be the vertex such that $N_H(w'_1)=\{w_2,w_8\}$.
Since $G\neq G'_{14}$, it follows that $\{w_1,w'_1\}\neq \{v_1,v_7\}$ and we may assume that $\deg_G(w'_1)=2$. 
If $w_1$ or $w'_1$ is in $\{v_1,v_7\}$, then we can find an edge whose ends have degree at least three, which is a contradiction to Claim~\ref{claim:cutedge}. 
Then  $N_G(w'_1)=N_G(w_1)$. Thus $G-w'_1$ is not in 
$\mathcal{B}$ and so $\gamma_h(G)\le \gamma_h(G-w'_1)\le\frac{2(n-1)}{5}$ by Proposition~\ref{obs:two_cycles} (i) and the minimality of $G$, a contradiction.
\end{proof}

Let $G^*=Dist(G:2)$. 
If $\Delta(G)= n-1$ then it is easy to see that $\gamma_h(G)\le 2$ by taking a vertex of degree $n-1$ and any other vertex. Thus $\Delta(G)\le n-2$, and so  $\delta(G^*)\ge 1$.

\begin{claim}\label{claim:G*=1}
$\delta(G^*)=1$.
\end{claim}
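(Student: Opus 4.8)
The plan is to argue by contradiction: assume $\delta(G^*)\ge 2$ and apply the McCuaig--Shephard bound (Theorem~\ref{thm:original}) to $G^*=Dist(G:2)$, using the identity $\gamma_h(G)=\gamma(G^*)$ from Proposition~\ref{prop:G*}(i) together with the standing assumption $\gamma_h(G)>\frac{2n}{5}$. The first step is to pin down the component structure of $G^*$. Since an edge of $G^*$ joins two vertices at distance exactly $2$ in $G$, the graph $G^*$ respects any bipartition of $G$; so if $G$ is non-bipartite then $G^*$ is connected, while if $G$ is bipartite with parts $X,Y$ then $G^*$ has exactly the two components $G^*[X]$ and $G^*[Y]$, each of which inherits $\delta\ge 2$ from $\delta(G^*)\ge 2$.

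If $G$ is non-bipartite, then $G^*$ is a connected graph on $n$ vertices with $\delta(G^*)\ge 2$. Were $G^*\notin\{H_1,\dots,H_7\}$, Theorem~\ref{thm:original} would give $\gamma(G^*)\le \frac{2n}{5}$, i.e.\ $\gamma_h(G)\le\frac{2n}{5}$, a contradiction. Hence $G^*$ is isomorphic to one of the seven exceptional graphs, each a fixed graph on at most $7$ vertices; thus $n\le 7$, and combined with $n\ge 6$ only the cases $n\in\{6,7\}$ survive. For these one lists the finitely many connected triangle-free non-bipartite graphs with $\delta\ge 2$ and $G\notin\mathcal{B}$, and checks directly, via $\gamma_h(G)=\gamma(G^*)$, that $\gamma_h(G)\le \frac{2n}{5}$ in every case, contradicting the choice of $G$.

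If $G$ is bipartite, apply Theorem~\ref{thm:original} to each component. If neither $G^*[X]$ nor $G^*[Y]$ is exceptional, then $\gamma(G^*)=\gamma(G^*[X])+\gamma(G^*[Y])\le \frac{2|X|}{5}+\frac{2|Y|}{5}=\frac{2n}{5}$, again a contradiction. So some component, say $G^*[X]$, is one of the $H_i$, whence $|X|\le 7$. Now bound the opposite side: every $y\in Y$ satisfies $N_G(y)\subseteq X$ and $|N_G(y)|\ge 2$, and by Proposition~\ref{obs:two_cycles}(i) two vertices of $Y$ sharing a common neighbourhood may be removed one at a time (re-checking that connectivity and $\delta\ge 2$ survive, and invoking the minimality of $G$); so we may assume the vertices of $Y$ have pairwise distinct neighbourhoods in $X$, giving $|Y|\le 2^{|X|}$. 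Thus $G$ is forced into a finite list of configurations, and for each admissible $H_i$ one reconstructs the possible $G$ and verifies that either $G\in\mathcal{B}$ or $\gamma_h(G)\le\frac{2n}{5}$. For instance, $G^*[X]\cong C_4$ with distinct neighbourhoods forces $G\cong C_8\in\mathcal{B}$.

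I expect the bipartite exceptional analysis to be the main obstacle: one must enumerate, for each McCuaig--Shephard graph $H_i$ realizable as $Dist(G:2)$ of a triangle-free bipartite $G$, all resulting graphs $G$. The delicate point is the degenerate reductions — deleting a vertex of $Y$ with a duplicated neighbourhood can drop the degree of a shared neighbour in $X$ below $2$, so the clean appeal to minimality fails exactly when those shared neighbours have degree $2$, and these local theta-type configurations have to be handled separately before the distinct-neighbourhood normalization applies. Once every exceptional $G$ is shown to lie in $\mathcal{B}$ or to satisfy $\gamma_h(G)\le \frac{2n}{5}$, the assumption $\delta(G^*)\ge 2$ is untenable, and since $\delta(G^*)\ge 1$ was already established, $\delta(G^*)=1$ follows.
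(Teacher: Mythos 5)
Your opening move coincides with the paper's: assume $\delta(G^*)\ge 2$, use $\gamma_h(G)=\gamma(G^*)$ and Theorem~\ref{thm:original} on the components of $G^*$, and conclude that some component of $G^*$ must be one of the exceptional graphs $H_1,\dots,H_7$. (Your bipartite/non-bipartite description of the components of $G^*$ is correct for triangle-free $G$ and is a clean way to organize this.) The problem is that everything after this point --- which is the actual content of the claim --- is deferred to an enumeration you neither carry out nor make feasible. In the bipartite case your only concrete tool is the normalization ``vertices of $Y$ have pairwise distinct neighbourhoods,'' obtained by deleting twins via Proposition~\ref{obs:two_cycles}(i) and minimality. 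You yourself observe that this deletion can destroy $\delta\ge 2$ (when a shared neighbour has degree $2$) and leave those ``theta-type configurations'' unresolved; you also do not treat the case $G-w\in\mathcal{B}$ (for $G-w=C_8$, i.e.\ $n=9$, the available bound is only $\gamma_h(G)\le\frac{2\cdot 8+4}{5}=4>\frac{2n}{5}$, so no contradiction follows automatically). Even granting the normalization, $|Y|\le 2^{|X|}$ with $|X|\le 7$ leaves candidate graphs on up to roughly $130$ vertices to ``reconstruct and verify,'' which is not a proof as written.

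The paper closes exactly this gap with two structural steps that your proposal lacks. First, its Subclaim~\ref{subclaim:ww'} forbids duplicated neighbourhoods only for \emph{degree-two} vertices $w,w'$: in that case $\delta(G^*)\ge 2$ forces every common neighbour of $w$ and $w'$ to have degree at least $3$ in $G$, so $\delta(G-w)\ge 2$ survives and Remark~\ref{rmk} applied to $Dist(G-w;2)$ gives $\gamma_h(G)\le\frac{2(n-1)+2}{5}=\frac{2n}{5}$ (with the degenerate case collapsing to $K_{2,3}$). Second, it splits the exceptional graphs by whether they contain a triangle: for the triangle-free ones ($H_1,H_2,H_3$) every $G$-vertex whose neighbours lie in that component must have degree $2$ (a degree-$3$ vertex of $G$ creates a triangle in $G^*$), so the subclaim forces $G$ to be a subdivision of $H_i$, a short explicit list; for $H_4,\dots,H_7$ it uses an induced path $v_1v_2v_3v_4$ of $H_i$ with internal $G^*$-degrees $2$, which lifts to a path of five consecutive degree-two vertices of $G$ and contradicts Claim~\ref{claim:path}. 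Your proposal makes no use of Claim~\ref{claim:path} and offers no substitute for either step, so the exceptional case --- the whole point of the claim --- remains open.
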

\begin{proof}
Suppose to the contrary that $\delta(G^*)\ge 2$.
If $G^*$ has no connected component in Figure~\ref{fig:b}, then $\gamma_h(G)=\gamma(G^*)\le \frac{2n}{5}$ by Proposition~\ref{prop:G*} (i) and Theorem~\ref{thm:original}. 
Suppose that $G^*$ has $H_i$ as a connected component, where $H_i$ is a graph in Figure~\ref{fig:b}.

\begin{subclaim}\label{subclaim:ww'}
There are no two vertices $w$ and $w'$ such that $\deg_G(w)=\deg_G(w')=2$ and $N_G(w)=N_G(w')$.
\end{subclaim}
\begin{proof}
Suppose that there are two vertices $w$ and $w'$ such that $\deg_G(w)=\deg_G(w')=2$ and $N_G(w)=N_G(w')$.  Let $N_G(w)=\{x,y\}$. 
Note that $\delta(G-w)\ge 2$, since $\delta(G^*)\ge 2$.
If $Dist(G-w;2)$ also has minimum degree at least two, then $\gamma_h(G)\le \gamma_h(G-w) =\gamma(Dist(G-w;2))\le \frac{2(n-1)+2}{5}=\frac{2n}{5}$ by Proposition~\ref{obs:two_cycles} (i) and Remark~\ref{rmk}, which is a contradiction. Thus $Dist(G-w;2)$ has minimum degree one.  Let $z$  be a vertex of degree one in $Dist(G-w;2)$. Since $z$ has degree two in $G^*$, it follows that  $N_2(z;G)=\{w,u\}$ for some vertex $u$. 
Since $N_G(w)=N_G(w')=\{x,y\}$, it follows that $u=w'$.
Since $\delta(G)\ge 2$, it also follows that $N_G(z)=\{x,y\}$. The neighbor of $x$ other than $z$ must be $w$ or $w'$, which implies that $N_G(x)= N_G(y)=\{z,w,w'\}$. Then $G=K_{2,3}$, which is a contradiction.
\end{proof}
Suppose that $i\in\{1,2,3\}$. For each edge $e$ of $H_i$, there is a common neighbor $w(e)$ of the end vertices of $e$ in $G$. 
Since $H_i$ is triangle-free, such vertices $w(e)$ are all distinct and $\deg_G(w(e))=2$.
By Subclaim~\ref{subclaim:ww'}, for each edge $e$ of $G^*$, a common neighbor $w(e)$ of the end vertices of $e$ exists uniquely.
It follows that $G$ is a subdivision of $H_i$. Then it is easy to check that either $G\in \mathcal{B}$ or $\gamma_h(G)\le \frac{2n}{5}$, a contradiction.

Suppose that $i\in\{4,5,6,7\}$. 
Then $G^*$ has a path $v_1v_2v_3v_4$ such that $\deg_{G^*}(v_2)=\deg_{G^*}(v_3)=2$, and $\deg_{G^*}(v_1), \deg_{G^*}(v_4)\ge 3$.
For each $s\in\{1,2,3\}$, since $v_s v_{s+1}$ is an edge of $G^*$, there is a common neighbor $w_s$  of $v_s$ and $v_{s+1}$ in the graph $G$. Note that since $\deg_{G^*}(v_2)=\deg_{G^*}(v_3)=2$, it follows that $w_1,w_2,w_3$ are distinct vertices, and $\deg_G(w_1)=\deg_G(w_2)=\deg_G(w_3)=2$. 
If $\deg_G(v_2)=\deg_G(v_3)=2$, 
then we reach a contradiction to Claim~\ref{claim:path} by considering the path  $v_1w_1v_2w_2v_3w_3v_4$.
We may assume that $\deg_G(v_2)\ge 3$. 
By Proposition~\ref{prop:G*} (ii), the 2-step neighbors of $v_2$ are only $v_1$ and $v_3$.  Then there is a vertex $w'$ such that either $N_G(w')=N_G(w_1)$ or $N_G(w')=N_G(w_2)$, which is a contradiction to Subclaim~\ref{subclaim:ww'}.
\end{proof}

By Claim~\ref{claim:G*=1}, $\delta(G^*)=1$ and so there is a vertex $u$ such that $\deg_{G^*}(u)=1$. Let $N_{G^*}(u)=\{v\}$ and $W=N_G(u)$. Since $\delta(G)\ge 2$, $|W|\ge 2$.  Since $N_{G^*}(u)=\{v\}$, $G$ is triangle-free and $\delta(G)\ge 2$, it follows that $N_G(w)=\{u,v\}$ for every $w\in W$. Thus $W\subset N_G(v)$. 

\begin{claim}\label{claim:W}
It holds that $v$ is a cut-vertex of $G$, and $|W|=2$.
\end{claim}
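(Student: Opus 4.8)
The plan is to prove the two assertions in turn, using the local picture already forced in the text: since $N_{G^*}(u)=\{v\}$, triangle-freeness and $\delta(G)\ge 2$ give $N_G(w)=\{u,v\}$ for every $w\in W=N_G(u)$, with $W\subseteq N_G(v)$. In particular every $w\in W$ is a degree-$2$ vertex whose neighbourhood is exactly $\{u,v\}$ (so the vertices of $W$ are pairwise ``twins''), the set $\{u,v\}\cup W$ induces a complete bipartite graph with parts $\{u,v\}$ and $W$, and $u$ has no neighbour outside $W$. I will use this structure throughout.

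For the cut-vertex assertion I would examine $G-v$. Each $w\in W$ satisfies $N_{G-v}(w)=\{u\}$ and $N_{G-v}(u)=W$, so $\{u\}\cup W$ is a connected component of $G-v$ (a star centred at $u$) with no edge leaving it. Hence if $G-v$ were connected we would have $V(G)=\{v\}\cup\{u\}\cup W$; together with $u\not\sim v$ and $W\subseteq N_G(v)$ this forces $N_G(v)=W$, so $G$ is the complete bipartite graph with parts $\{u,v\}$ and $W$, contradicting the earlier observation that $G$ is not complete bipartite. Therefore $G-v$ is disconnected and $v$ is a cut-vertex.

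For $|W|=2$ I would argue by contradiction, assuming $|W|\ge 3$. Any two vertices $w,w'\in W$ satisfy $\emptyset\ne N_G(w)=N_G(w')=\{u,v\}$, so Proposition~\ref{obs:two_cycles}(i) gives $\gamma_h(G)\le\gamma_h(G-w)$. The graph $G-w$ is connected (the vertices $u,v$ remain joined through $w'$), triangle-free, and has $\delta(G-w)\ge 2$, since deleting $w$ lowers only $\deg_G(u)=|W|$ and $\deg_G(v)\ge|W|$, each of which stays at least $|W|-1\ge 2$. As $G-w$ has $n-1$ vertices and fewer edges, the minimality in the choice of $G$ applies: if $G-w\notin\mathcal{B}$, then $\gamma_h(G)\le\gamma_h(G-w)\le\frac{2(n-1)}{5}<\frac{2n}{5}$, contradicting $\gamma_h(G)>\frac{2n}{5}$.

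It remains to exclude $G-w\in\mathcal{B}$, and this finite check is where the real work lies. The leverage is that in $G-w$ the vertex $u$ still has $|W|-1\ge 2$ neighbours, every such neighbour is a degree-$2$ vertex with neighbourhood exactly $\{u,v\}$, and $u$ has no further neighbour. I would verify that no graph of $\mathcal{B}$ on at least $5$ vertices contains a vertex of this type: in the cycles $C_7,C_8,C_{14}$ the two neighbours of any vertex have distinct neighbourhoods, while in $G_9,G_{14},G'_{14}$ a direct inspection of Figure~\ref{fig:BB} shows that any vertex adjacent to two common-neighbourhood twins carries an additional neighbour, contradicting $N_{G-w}(u)=W\setminus\{w\}$. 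The only member of $\mathcal{B}$ admitting such a vertex is $C_4$, which is too small to equal $G-w$ since $n\ge 6$. This contradiction yields $|W|=2$ and finishes the claim. The main obstacle is exactly this case analysis over the seven exceptional graphs; the rest is a short structural deduction.
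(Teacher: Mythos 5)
Your proof is correct and follows essentially the same route as the paper: the cut-vertex assertion comes from $G$ not being complete bipartite (so $N_G(v)\setminus W\neq\emptyset$ while $\{u\}\cup W$ attaches to the rest of $G$ only through $v$), and $|W|\ge 3$ is excluded by deleting a twin $w\in W$ and combining Proposition~\ref{obs:two_cycles}(i) with the minimality of $G$. The only difference is that you explicitly verify $G-w\notin\mathcal{B}$ by inspecting the seven exceptional graphs, a step the paper asserts without justification.
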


\begin{proof}
Since $G$ is not a complete bipartite graph, $N_G(v)\setminus W\neq\emptyset$, and therefore, $v$ is a cut-vertex of $G$.
Suppose to the contrary that $|W|\ge 3$. 
Then take a vertex $w\in W$, and 
note that $\delta(G-w)\ge 2$ and $G-w$ is connected. Moreover, $G-w\not\in \mathcal{B}$. By Proposition~\ref{obs:two_cycles} (i) and the minimalty of $G$, $\gamma_h(G)\le \gamma_h(G-w) \le \frac{2(n-1)}{5}$, which is a contradiction. Thus $|W|=2$. 
\end{proof}

It follows from Claim~\ref{claim:W} that $v$ has degree at least three in $G$.

\begin{claim}\label{claim:deg3}
It holds that $\deg_G(v)=3$.
\end{claim}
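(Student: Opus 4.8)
The plan is to argue by contradiction: suppose $\deg_G(v)\ge 4$. Recall the local picture established so far: $u,w_1,w_2,v$ form a pendent $4$-cycle at $v$ (with $u,w_1,w_2$ all of degree $2$, $N_G(w_i)=\{u,v\}$ and $N_G(u)=\{w_1,w_2\}$), and $v$ has at least two further neighbours outside $W=\{w_1,w_2\}$. First I would pass to $H:=G-\{u,w_1,w_2\}$. Deleting these three vertices lowers only the degree of $v$, by exactly $2$, so $\deg_H(v)=\deg_G(v)-2\ge 2$ and hence $\delta(H)\ge 2$; moreover $H$ is connected (as $v$ is a cut-vertex whose pendent $4$-cycle is precisely the deleted part), triangle-free, and has $n-3$ vertices. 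Thus $H$ is again a candidate for the minimality of $G$.

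Next I would set up the lift. A direct computation of $2$-step neighbourhoods gives $N_2(u;G)=\{v\}$ and $N_2(w_i;G)=\{w_{3-i}\}\cup\bigl(N_G(v)\setminus W\bigr)$ for $i\in\{1,2\}$. Consequently, to hop-dominate the three deleted vertices it is enough that a hop dominating set contain $v$ (which then dominates $u$) together with one neighbour of $v$ outside $W$ (which then dominates both $w_1$ and $w_2$). Since a $2$-step neighbour inside the induced subgraph $H$ is, by triangle-freeness, still a $2$-step neighbour in $G$ (cf.\ the proof of Proposition~\ref{obs:two_cycles}), any hop dominating set of $H$ continues to dominate every vertex of $H$ in $G$. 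Hence, if $H\notin\mathcal{B}$ and I can find a \emph{minimum} hop dominating set $S_H$ of $H$ that already meets $\{v\}\cup N_H(v)$, then adjoining a single vertex (either $v$, or one neighbour of $v$) yields a hop dominating set of $G$ of size at most
\[
\gamma_h(H)+1\le \frac{2(n-3)}{5}+1=\frac{2n-1}{5}<\frac{2n}{5},
\]
contradicting $\gamma_h(G)>\frac{2n}{5}$.

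The hard part will be guaranteeing such a well-placed $S_H$. An arbitrary minimum hop dominating set of $H$ may avoid $\{v\}\cup N_H(v)$ entirely, in which case the naive extension needs two extra vertices and overshoots the bound by $\tfrac45$. To rule this out I would prove the auxiliary statement that $H$ admits a minimum hop dominating set meeting $\{v\}\cup N_H(v)$. When $v$ happens to lie on a pendent $4$-cycle of $H$ this is immediate from Proposition~\ref{lemma:pendent_cycle}, which forces the apex into $S_H$ together with one of its neighbours. In the remaining configurations I would exploit that $\deg_H(v)\ge 2$ (so $v$ has two distinct neighbours that are mutually $2$-step adjacent via $v$), together with the structural restrictions already proved---no two adjacent vertices of degree at least three except across the special cut-edges of Claim~\ref{claim:cutedge}, and no long induced degree-$2$ path by Claim~\ref{claim:path}---to perform a size-preserving modification that brings a neighbour of $v$ into the set, or else to contradict the maximality of the number of pendent $4$-cycles in condition~(2) of the choice of $G$. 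This existence lemma is the crux of the argument.

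Finally, I would dispose of the case $H\in\mathcal{B}$ separately. Here $H$ is one of finitely many graphs, and the constraint $\deg_H(v)=\deg_G(v)-2\ge 2$ pins down the location of $v$ and forces $n$ to be small, so one simply checks by hand that $G$---the relevant member of $\mathcal{B}$ with a pendent $4$-cycle attached at $v$---already satisfies $\gamma_h(G)\le\frac{2n}{5}$, again a contradiction. Combining all cases shows $\deg_G(v)\le 3$, and since $v$ has degree at least three by Claim~\ref{claim:W}, this gives $\deg_G(v)=3$.
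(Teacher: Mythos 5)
Your reduction is set up correctly as far as it goes: $H=G-\{u,w_1,w_2\}$ is indeed connected, triangle-free, has $\delta(H)\ge 2$ when $\deg_G(v)\ge 4$, and your computation of $N_2(u;G)$ and $N_2(w_i;G)$ is right, so a minimum hop dominating set of $H$ that meets $\{v\}\cup N_H(v)$ would extend by at most one vertex to a hop dominating set of $G$ of size at most $\frac{2(n-3)}{5}+1<\frac{2n}{5}$. The problem is that everything hinges on the ``existence lemma'' you defer --- that $H$ admits a minimum hop dominating set meeting $\{v\}\cup N_H(v)$ --- and you give no proof of it. Nothing in the paper supplies it: Proposition~\ref{lemma:pendent_cycle} forces $v$ and one of its neighbours into the set only when $v$ is the apex of a pendent $4$-cycle, and your $H$ is obtained precisely by deleting that pendent $4$-cycle, so the proposition no longer applies to $v$ inside $H$. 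A minimum hop dominating set of an arbitrary graph need not meet the closed neighbourhood of a prescribed vertex, and arranging that it does here is essentially the entire difficulty of the claim, not a detail to be checked later.

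The paper's actual proof shows why this is the crux: rather than deleting the pendent $4$-cycle $uw_1vw_2$, it keeps it (so that Proposition~\ref{lemma:pendent_cycle} continues to force $v$ and a neighbour of $v$ into the dominating set of every auxiliary graph considered) and performs surgery on the \emph{other} side of $v$ --- first using Claim~\ref{claim:cutedge} to show every neighbour of $v$ has degree two, then analysing the vertices at distance one, two and three from $v$ along those branches, and in the final case applying the edge swap $(G-vx_1)+x_1w_1$ to create a new pendent $4$-cycle and invoke the tiebreaker~(2) in the choice of $G$. Your sketch gestures at these tools (``a size-preserving modification \ldots or else contradict the maximality of the number of pendent $4$-cycles'') without carrying any of them out, so the argument as written has a genuine gap at its central step. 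The peripheral parts --- the extension arithmetic and the finite check when $H\in\mathcal{B}$ --- are fine.
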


\begin{proof} Suppose to the contrary that $\deg_G(v)\ge 4$.
First, suppose that $v$ has a neighbor $x$ such that $\deg_G(x)\ge 3$. Then $x\in N_G(v)\setminus W$. 
By Claim~\ref{claim:cutedge}, $e=vx$ is a cut-edge of $G$ and exactly one connected component of $G-e$ is in $\{C_4,C_7\}$ and the other connected component is not in $\mathcal{B}$. Let $D_1$ and $D_2$ be the connected components of $G-e$ such that $D_1\not\in \mathcal{B}$ and $D_2\in \{C_4,C_7\}$. 
Since $\deg_G(v)\ge 4$, $x\in V(D_2)$. By the minimality of $G$ on (1), there is a hop dominating set $S_1$ of $D_1$ such that $|S_1|\le \frac{2|V(D_1)|}{5}$, $v\in S_1$  and $N_{D_1}(v)\cap S_1\neq\emptyset$ by Proposition~\ref{lemma:pendent_cycle}.
If $D_2=C_4$, then $S_1\cup \{x\}$ is a hop dominating set of $G$.
If $D_2=C_7$, then $S_1\cup \{a,b \}$ is a hop dominating set of $G$, where $ab$ is an edge of $D_2$ farthest from $x$.
Thus every neighbor of $v$ has degree two.  

Let $N_G(v)\setminus W=\{x_1,x_2,\ldots,x_t\}$. Let $y_i$ be a neighbor of $x_i$ other than $v$.
 We divide the proof into two cases.

\medskip 

\noindent (Case1) Suppose that for some $i$, $y_i$ has degree at least three. Let $H=G-x_i$. Then $\delta(H)\ge 2$. 
By Proposition~\ref{lemma:pendent_cycle}, there is a  hop dominating set $S$ of $H$ such that $\gamma_h(H)=|S|$, $v\in S$ and $S\cap N_H(v)\neq \emptyset$. Then $S$ is also a hop dominating set of $G$, and so $\gamma_h(G)\le |S|$. 
If $H$ is connected, then $|S|\le \frac{2(n-1)}{5}$.
Suppose that $H$ has two connected components $D_1$ and $D_2$. Then say $v\in D_1$ and then $D_1\not\in\mathcal{B}$. By induction, $\gamma_h(D_1)\le \frac{2|V(D_1)|}{5}$.
Let $D_2$ be the other connected component of $H$.
If $D_2\neq C_8$, then $\gamma_h(D_{2})\le \frac{2|V(D_2)|+2}{5}$ by \eqref{eq:general:H}, and hence, $\gamma_h(G)\le \gamma_h(D_1)+\gamma_h(D_2)\le \frac{2n}{5}$.
If $D_2=C_8$, then 
$\gamma_h(G)\le |S\cap V(D_1)|+|\{x_1,a,b\}|
\le \frac{2(n-9)}{5}+3 \le \frac{2n}{5}$, where $ab$ is an edge of $D_2$ that is farthest from $y_1$.
In any case, we reach a contradiction.

\medskip 

\noindent (Case 2) Suppose that $y_i$ has degree $2$  for every $1\le i\le t$. 
Let $z_i$ be the neighbor of $y_i$ other than $x_i$.
Since $G$ is triangle-free, $v,x_i,y_i,z_i$ are distinct vertices.

Suppose that for some $i$, either $\deg_G(z_i)\ge 3$, or 
$\deg_G(z_i)=2$ and $vz_i\in E(G)$. 
If  $\deg_G(z_i)\ge 3$, then let  $H=G-\{x_i,y_i\}$.  
If  $\deg_G(z_i)=2$ and $vz_i\in E(G)$, then let  $H=G-\{x_i,y_i,z_i\}$.
Then $\delta(H)\ge 2$, and so there is a hop dominating set $S$ of $H$ such that $|S|=\gamma_h(H)$, $v\in S$, and $N_H(v)\cap S\neq\emptyset$ by Proposition~\ref{lemma:pendent_cycle}. Then $S$ is a hop dominating set of $G$.
By \eqref{eq:general:H}, $\gamma_h(G)\le |S|=\gamma_h(H)\le \frac{2(n-2)+4}{5}=\frac{2n}{5}$, a contradiction. Thus $\deg_G(z_i)=2$ and $vz_i\not\in E(G)$ for every $1\le i\le t$.

Let $w_1$ be the neighbor of $z_1$ other than $y_1$.
Let  $H$ be the graph obtained from $(G-vx_1)+x_{{1}}w_1$. Then $x_1,y_1,z_1,w_1$ form a pendent $4$-cycle at the vertex $w_1$. Then $\delta(H)\ge 2$ and $H$ has no connected component in $\mathcal{B}$. Then $|V(H)|=|V(G)|$,  $|E(H)|=|E(G)|$, and  $H$ has more pendent $4$-cycles than $G$.
By the choice of $G$ on (2), $H$ has a hop dominating set $S$ of $H$ such that $|S|\le\frac{2n}{5}$, $w_1,v\in S$, $N_H(v)\cap S\neq \emptyset$,  and $N_H(w_1)\cap S\neq \emptyset$. 
We replace $x_1$ with $z_1$ if $x_1\in S$.
Then $S$ is a hop dominating set of $G$, a contradiction.
\end{proof}

By Claim~\ref{claim:deg3}, $\deg_G(v)=3$ and  
we let $x$ be the neighbor of $v$ other than $W$.

\begin{claim}
Every neighbor of $x$ other than $v$ has degree two.
\end{claim}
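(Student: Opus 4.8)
Recall the accumulated structure: $v$ has degree $3$ with $N_G(v)=\{w_1,w_2,x\}$, where $w_1,w_2\in W$ satisfy $N_G(w_1)=N_G(w_2)=\{u,v\}$, and $u$ is the degree-one vertex of $G^*$ with $N_{G^*}(u)=\{v\}$. I want to show every neighbor of $x$ other than $v$ has degree two. Suppose to the contrary that $x$ has a neighbor $y\neq v$ with $\deg_G(y)\ge 3$. The plan is to delete a small controlled piece of $G$ near $u,v,x$, apply the minimality of $G$ to the (smaller) remainder together with the inequality~\eqref{eq:general:H}, and then show that a minimum hop dominating set of the remainder extends to one of $G$ without exceeding $\frac{2n}{5}$.

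**The deletion and the extension.**
First I would argue that $vx$ cannot be a cut-edge with both endpoints of degree $\ge 3$: since $\deg_G(x)\ge 3$ (as $x$ has a neighbor $y$ of degree $\ge 3$ and $x$ is adjacent to $v$), Claim~\ref{claim:cutedge} would force one component of $G-vx$ to be $C_4$ or $C_7$; but the $v$-side component contains $u,w_1,w_2,v$ forming a structure that, I expect, is incompatible with being precisely $C_4$ or $C_7$ while $x$ lies on the other side, so this case is eliminated or handled directly. With $vx$ not such a cut-edge, consider $H=G-\{u,w_1,w_2\}$. Deleting these three vertices leaves $v$ with $x$ as a neighbor, but $\deg_H(v)=1$, so instead I would delete $H=G-\{u,w_1,w_2,v\}$, removing four vertices. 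After this deletion $x$ retains degree $\ge 2$ (it keeps $y$ and at least one further neighbor, or I adjust the argument using $\deg_G(y)\ge 3$), so $\delta(H)\ge 2$ on each relevant component. By the minimality of $G$ on~(1) together with~\eqref{eq:general:H}, every connected component $D$ of $H$ satisfies $\gamma_h(D)\le \frac{2|V(D)|+4}{5}$, giving $\gamma_h(H)\le \frac{2(n-4)+4}{5}=\frac{2n-4}{5}$ in the worst single-component case, and an analogous bound when $H$ splits.

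**Extending the hop dominating set.**
Take a minimum hop dominating set $S$ of $H$; using Proposition~\ref{lemma:pendent_cycle} I would arrange that $x\in S$ when $x$ lies on a pendent $4$-cycle, and more generally that $S$ behaves well near $x$. The key point is that in $G$ the vertices $u,w_1,w_2$ are hop-dominated cheaply: $v\in N_2(w_i;G)$ since $u$ is a common neighbor, and $u$ has $v$ as its unique $2$-step neighbor, so adding $v$ to $S$ hop-dominates all of $u,w_1,w_2$ at once. Thus $S\cup\{v\}$ is a hop dominating set of $G$ of size at most $\frac{2n-4}{5}+1=\frac{2n+1}{5}<\frac{2n}{5}+1$, and since $\gamma_h(G)$ is an integer strictly larger than $\frac{2n}{5}$ would be needed for a contradiction, I must tighten this: I would instead use the sharper deletion that removes $u,w_1,w_2$ together with one more vertex absorbed into a path component, ensuring the bound $\frac{2(n-4)+2}{5}$ and hence $|S\cup\{v\}|\le \frac{2n}{5}$.

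**Main obstacle.**
The hard part will be controlling the component structure of $H$ and the exceptional-graph bookkeeping: after deleting $\{u,w_1,w_2,v\}$ the graph may disconnect into several pieces, some of which land in $\mathcal{B}$, so I must verify~\eqref{eq:general:H} applies to each piece and that the additive constants $+2$ or $+4$ sum to at most the slack available. The delicate case is when a component equals $C_8$ (the only graph with the weaker $+4$ bound), where I expect to need a direct ad hoc hop dominating set near $x$ and $y$ rather than the generic inequality, exactly as was done in the $C_8$ subcases of Claim~\ref{claim:deg3}.
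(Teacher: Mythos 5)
There is a genuine gap, and it sits at the heart of your extension step. You assert that $v\in N_2(w_i;G)$ ``since $u$ is a common neighbor,'' and conclude that adding $v$ alone to $S$ hop-dominates $u$, $w_1$ and $w_2$. But $w_i\in W\subset N_G(v)$, so $v$ and $w_i$ are \emph{adjacent} and their distance is $1$, not $2$; hence $v\notin N_2(w_i;G)$. In fact $N_2(w_i;G)=\{w_{3-i},x\}$, so to hop-dominate $w_1$ and $w_2$ you must put $x$ (or $w_{3-i}$) into the set. This is exactly why the paper, in the case $\deg_G(x)=2$, deletes the five vertices $W\cup\{u,v,x\}$ and adds back \emph{both} $v$ and $x$: the count $\frac{2(n-5)}{5}+2=\frac{2n}{5}$ then closes, whereas your four-vertex deletion forces the bound $\frac{2n-4}{5}+1>\frac{2n}{5}$ that you yourself notice does not suffice, and the ``sharper deletion'' you invoke to repair it is never specified. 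A second concrete error: in the case $\deg_G(x)\ge 3$ you hope to rule out $vx$ being a cut-edge of the type in Claim~\ref{claim:cutedge} because the $v$-side ``is incompatible with being precisely $C_4$ or $C_7$.'' It is not incompatible: the component of $G-vx$ containing $v$ is exactly the $4$-cycle $vw_1uw_2v$, so Claim~\ref{claim:cutedge} is satisfied and yields no contradiction there. The paper instead applies Claim~\ref{claim:cutedge} to the edge $xy$ (both ends of degree at least $3$), observes that the component containing $x$ also contains $v$ of degree $3$ and so cannot be $C_4$ or $C_7$, forces $y$ into a pendant $C_4$ or $C_7$, and extends a hop dominating set of the other component using Proposition~\ref{lemma:pendent_cycle}.

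Beyond these two errors, your plan omits the exceptional analysis that actually carries most of the weight in the paper's argument: after the deletion $H=G-(W\cup\{u,v,x\})$, the graph $H$ may lie in $\mathcal{B}$, and then neither the minimality of $G$ nor \eqref{eq:general:H} gives a small enough bound; the paper disposes of each of $H\in\{C_4,C_7,C_8,C_{14},G_9,G_{14},G'_{14}\}$ by exhibiting explicit hop dominating sets (Figure~\ref{fig:some}). Gesturing at ``exceptional-graph bookkeeping'' does not substitute for these constructions, since for several of these graphs the generic inequalities genuinely fail to close the gap.
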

\begin{proof}
Suppose that $x$ has a neighbor of $y$ of degree at least three.
Suppose that $\deg_G(x)\ge 3$. 
Let $H=G-xy$. Then $\delta(H)\ge 2$. 
By Claim~\ref{claim:cutedge}, $H$ has two connected components $D_1$ and $D_2$, such that $D_1\not\in\mathcal{B}$ and $D_2\in\{C_4,C_7\}$. Note that $y\in V(D_2)$.
Take a hop dominating set $S$ of $D_1$ such that $|S|\le \frac{2|V(D_1)|}{5}$, and $v,x\in S$ by Proposition~\ref{lemma:pendent_cycle}.
If $D_2=C_4$, then $S\cup\{y\}$ is a hop dominating set of $G$.
If $D_2=C_7$, then $S\cup\{a,b\}$ is a hop dominating set of $G$, where $ab$ is an edge of $D_2$ farthest from $y$. We reach a contradiction. Thus $\deg_G(x)=2$.
Let $H=G-(W\cup\{u,v,x\})$. If $H\not\in \mathcal{B}$, then for a minimum hop dominating set $S$ of $H$, $S\cup\{v,x\}$ is a hop dominating set of $G$ whose size is at most $\frac{2n}{5}$, a contradiction. Thus $H\in\mathcal{B}$. 
The first four graphs of Figure~\ref{fig:some} show the cases where $H\in\{C_4,C_7,C_8,C_{14}\}$.
If $H\in\{G_{14},G'_{14}\}$, then  by Proposition~\ref{obs:two_cycles}~(ii),  the {fourth} graph of Figure~\ref{fig:some} shows that  $G$ has a hop dominating set of size at most $\frac{2n}{5}$.
If $H=G_9$, then take a vertex $z$ of $D_2$ that has degree two and is on a $4$-cycle of $D_2$, and then $G-z$ is equal to the  third graph of Figure~\ref{fig:some}, and so the circled vertices of the figure together with $z$ is a hop dominating set of $G$ with size five. Hence, in any case, $\gamma_h(G)\le \frac{2n}{5}$, which is a contradiction.
\begin{figure}[h!]
    \centering
    \includegraphics[width=15cm]{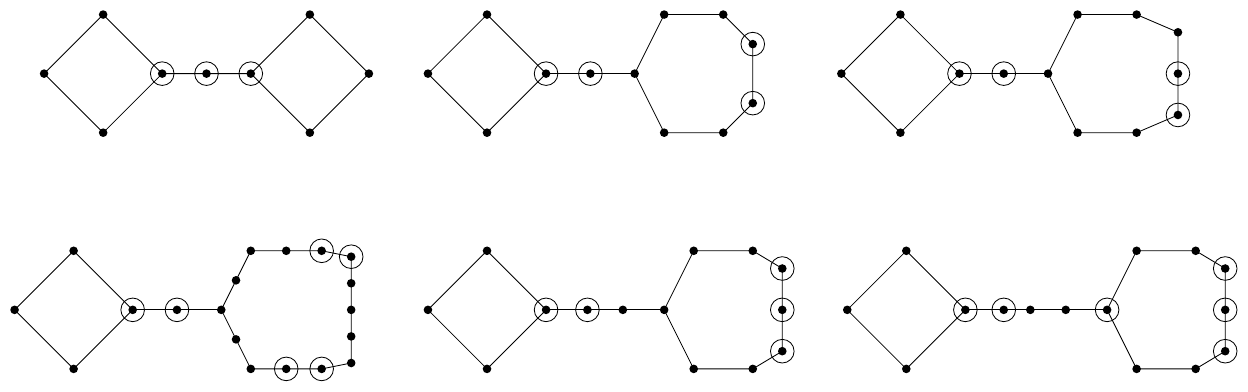}
    \caption{Some graphs, where  the circled vertices from a hop dominating set}
    \label{fig:some}
\end{figure} 
\end{proof}

Let $P:x x_1 \cdots x_k$ be a longest path of $G$ such that $\deg_G(x_i)=2$ if $1\le i<k$. Note that $\deg_G(x_k)\ge 3$.
If such path $P$ does not exists, then $G$ is obtained from some $H=C(m_1,\ldots,m_s)$ whose center vertex is $x$, by adding an edge $vx$ and a pendent $4$-cycle at vertex $v$. 
In that case, by Proposition~\ref{obs:ckl}, there is a hop dominating set $S$ of $H$, such that $x\in S$ and $|S|\le \frac{2(n-4)}{5}$, which implies that $S\cup\{v\}$ is  a hop dominating set $S$ of $G$, a contradiction. Thus such a path $P$ exists. 
Note that $k\ge 2$. 

\begin{claim}
$k\in \{2,3\}$
\end{claim}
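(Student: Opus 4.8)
The plan is to rule out $k\ge 4$ by peeling off the whole appendage hanging at $x_k$ and applying minimality. Recall the local structure forced by the previous claims: a pendent $4$-cycle $vw_1uw_2v$ (with $\deg_G(v)=3$ and $u,w_1,w_2$ of degree $2$) is attached to the degree-$2$ tail $v,x,x_1,\dots,x_{k-1}$, which terminates at the first vertex $x_k$ of degree at least $3$. Accordingly I set $B=\{u,w_1,w_2,v,x,x_1,\dots,x_{k-1}\}$, so $|B|=k+4$, and $H=G-B$. The only edge joining $B$ to the rest of $G$ is $x_{k-1}x_k$, so $H$ is connected and triangle-free, and the only vertex of $H$ losing a neighbour is $x_k$; since $\deg_G(x_k)\ge 3$ we keep $\deg_H(x_k)\ge 2$, whence $\delta(H)\ge 2$ and $|V(H)|+|E(H)|<n+|E(G)|$. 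Thus $H$ is governed by the minimality of $G$: either $H\in\mathcal B$ or $\gamma_h(H)\le\frac{2|V(H)|}{5}$, with \eqref{eq:general:H} covering the former.

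The engine is the splitting inequality $\gamma_h(G)\le\gamma_h(H)+\gamma_h(B)$. I would prove it by taking minimum hop dominating sets $S_H$ of $H$ and $T$ of $B$ and checking that $S_H\cup T$ hop-dominates $G$: both $H$ and $B$ are induced subgraphs, so any pair of vertices at distance exactly $2$ inside $H$ (resp. inside $B$) is still non-adjacent with a common neighbour in $G$, hence at distance exactly $2$ in $G$. Therefore every vertex outside $S_H\cup T$ retains a $2$-step neighbour in $S_H\cup T$. To bound the second term, observe that $B$ has the spanning Hamiltonian path $w_1uw_2vxx_1\cdots x_{k-1}$ on $k+4$ vertices, so Proposition~\ref{obs:two_cycles}(ii) together with \eqref{eq:Pn} gives $\gamma_h(B)\le\gamma_h(P_{k+4})\le\frac{2(k+4)}{5}$ as long as $k+4\notin\{8,9,14\}$. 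Combined with $\gamma_h(H)\le\frac{2|V(H)|}{5}$ when $H\notin\mathcal B$, the two bounds add to $\gamma_h(G)\le\frac{2n}{5}$, contradicting the choice of $G$.

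This disposes of all but the finitely many boundary situations, namely $k\in\{4,5,10\}$ (the values with $k+4\in\{8,9,14\}$) and $H\in\mathcal B$ (so $|V(H)|\le 14$). For the first family I would replace the crude path bound by the exact value $\gamma_h(B)=\gamma(Dist(B:2))$. Here $Dist(B:2)$ splits into the path $u\,v\,x_1\,x_3\,x_5\cdots$ and a second component made of the triangle $\{w_1,w_2,x\}$ with the even tail $x\,x_2\,x_4\cdots$ attached at $x$; since one vertex of a triangle covers all three, a short computation shows $\gamma_h(B)\le\frac{2(k+4)}{5}$ for every $k\ge 4$ with the single stubborn exception $k=5$, where $\gamma_h(B)=4>\frac{18}{5}$.

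The genuinely tight cases, $k=5$ and $H\in\mathcal B$, are where I expect the real work to lie. In these I would not dominate $B$ in isolation but exploit the port $x_k$: placing $x_k$, or a neighbour of $x_k$, into the dominating set of $H$ makes the far end $x_{k-2},x_{k-1}$ of the tail hop-dominated for free, recovering the missing unit, while the pendent $4$-cycle already supplies slack (the two vertices $v,x$ alone hop-dominate the seven vertices $u,w_1,w_2,v,x,x_1,x_2$). When even this is too tight I would fall back on the rewiring idea used for Claim~\ref{claim:deg3}: reroute an edge so as to create one more pendent $4$-cycle without changing $|V|+|E|$, and invoke the maximality condition (2); and when $H\in\mathcal B$ I would simply check the finitely many resulting explicit graphs. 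Having excluded $k\ge 4$, and since $k\ge 2$ was already observed, we conclude $k\in\{2,3\}$.
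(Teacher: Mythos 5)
Your route is genuinely different from the paper's. The paper does not decompose $G$ at all: for $k\ge 4$ it rewires the graph, setting $G'=(G-xx_1)+x_1x_4$ so that $x_1x_2x_3x_4x_1$ becomes a new pendent $4$-cycle while $|V|+|E|$ stays fixed, and then invokes the secondary extremal condition (2) (maximality of the number of pendent $4$-cycles) together with Proposition~\ref{lemma:pendent_cycle} to pull a hop dominating set of $G'$ of size at most $\tfrac{2n}{5}$ back to one of $G$. That single move disposes of every $k\ge4$ uniformly, with no exceptional values of $k$ and no case distinction on whether a remainder graph lies in $\mathcal B$.

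Your proposal, as written, has two genuine gaps. First, you assume the tail $v,x,x_1,\dots,x_{k-1}$ is a ``degree-$2$ tail,'' but nothing proved up to this point forces $\deg_G(x)=2$: the preceding claim only controls the neighbours of $x$ \emph{other than} $v$, and the paper's own arguments here and in the final step both split into the cases $\deg_G(x)\ge 3$ and $\deg_G(x)=2$. When $\deg_G(x)\ge 3$, your set $B$ meets the rest of $G$ through $x$ as well as through $x_{k-1}x_k$, the other neighbours of $x$ fall to degree one in $H=G-B$, so $\delta(H)\ge2$ fails and the minimality of $G$ cannot be applied to $H$; the engine of your argument stalls exactly there. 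Second, the situations where your additive bound $\gamma_h(G)\le\gamma_h(H)+\gamma_h(B)$ genuinely overshoots --- $k=5$, where $\gamma_h(B)=4>\tfrac{18}{5}$, and $H\in\mathcal B$, where \eqref{eq:general:H} costs an extra $\tfrac25$ or $\tfrac45$ --- are precisely the ones you defer with sketches (``exploit the port,'' ``fall back on the rewiring idea,'' ``check the finitely many graphs''). The last of these is not even a finite check, since for $H\in\mathcal B$ the length $k$ of the tail remains unbounded; and the rewiring fallback you mention in passing is in fact the whole of the paper's proof. As it stands, your argument establishes the claim only in the generic case ($\deg_G(x)=2$, $k\ne5$, $H\notin\mathcal B$), not in general.
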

\begin{proof}
Suppose that $k\ge 4$. Let $G'=(G-xx_1)+x_1x_4$. Note that $x_1x_2x_3x_4v_1$ is a pendent $4$-cycle at $x_4$ in $G'$. Note that $|V(G')|=n$, $|E(G')|=|E(G)|$, and $G'$ has more pendent $4$-cycles than $G$.

Suppose that $\deg_G(x)\ge 3$. Then $\delta(G')\ge 2$ and $G'$ has no connected component in $\mathcal{B}$. There is a minimum hop dominating set $S'$ of $G'$  such that $x_4,v,x\in S'$ and $N_{G}(x_4)\cap S'\neq \emptyset$ by Proposition~\ref{lemma:pendent_cycle}. By the choice of $G$, $|S'|\le\frac{2n}{5}$. Thus $S'$ is a hop dominating set of $G$, which is a contradiction.

Suppose that $\deg_G(x)=2$. Then let $H$ be the connected component of $G'$ containing $x_4$. Note that $|V(H)|\le n-5$. There is a minimum hop dominating set $S'$ of $G'$  such that $x_4\in S'$, $N_{G'}(x_4)\cap S'\neq \emptyset$ by Proposition~\ref{lemma:pendent_cycle}. If $S'$ contains $x_1$, then we replace $x_1$ with $x_3$ so that $S'$ does not have $x_1$.
Then $S'\cup\{v,x\}$ is a minimum hop dominating set of $G$, a contradiction.
\end{proof}
 
Let $G'=G-\{x_1,\ldots,x_{k-1}\}$. 
Suppose that $\deg_G(x)\ge 3$. Then $\delta(G')\ge 2$. 
If $G'$ has no connected component in $\mathcal{B}$, then 
take a minimum hop dominating set $S'$ of $G'$ such that $v,x\in S'$ by Proposition~\ref{lemma:pendent_cycle}, which implies that $S'$ is a minimum hop dominating set of $G$, a contradiction.
Thus, $G'$ has two connected components $D_1$ and $D_2$. We may assume that $D_1\not\in \mathcal{B}$ and $D_2\in\mathcal{B}$.
There is a  hop dominating set $S_1$ of $D_1$ such that $v,x\in S_1$  and $|S_1|\le \frac{2|V(D_1)|}{5}$ by Proposition~\ref{lemma:pendent_cycle}.
If $k=3$ or $D_2\neq C_8$, then   
by \eqref{eq:general:H}, there is a minimum hop dominating set $S_2$ of $D_2$ such that $|S_2|\le \frac{2|V(D_2)|+2(k-1)}{5}$.
If $k=2$ and $D_2=C_8$, then we take a minimum hop dominating set $S_2$ of a path $D_2-x_2$ such that $|S_2|=3\le \frac{2|V(D_2)|+2(k-1)}{5}$.
Note that $S_1\cup S_2$ is a hop dominating set of $G$. 
Then
\[ |S_1\cup S_2|\le \frac{2|V(D_1)|+2|V(D_2)|+2(k-1)}{5}= \frac{2n}{5} ,\] which is a contradiction.

Suppose that $\deg_G(x)=2$. Let $H$ be the connected component containing $x_k$. Then $|V(H)|\le n-6$.
If $H\neq C_8$, then for a minimum hop dominating set $S$ of $H$, we have $|S|\le \frac{2(n-6)+2}{5}$ by \eqref{eq:general:H}, and so  $S\cup\{v,x\}$ is a hop dominating set of $G$ whose size is at most $\frac{2n}{5}$, a contradiction.
Thus $H=C_8$. Then $G$ is one of the last two graphs in Figure~\ref{fig:some}, which shows that $\gamma_h(G)\le \frac{2n}{5}$. This completes the proof of our main theorem.

\section{An open problem}

It would be a natural extension to consider giving a sharp upper bound on the hop domination number for a graph with a large girth. So we propose the following open problem. 
\begin{problem}
Let $r$ be an integer with $4\leq r$. 
Determine the least value $f(n,r)$ such that the following proposition can be true: 
\begin{proposition}
If $G$ is a connected graph of order sufficiently large $n$ with $\delta(G)\geq 2$ and girth at least $r$, then $\gamma_h(G)\leq f(n,r)$. 
\end{proposition}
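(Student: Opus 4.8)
The plan is to reduce everything, as in the rest of the paper, to the distance-two graph $G^*=Dist(G:2)$, since $\gamma_h(G)=\gamma(G^*)$ by Proposition~\ref{prop:G*}(i). Thus determining $f(n,r)$ amounts to understanding how the girth condition on $G$ constrains the ordinary domination number of $G^*$, and then combining a matching construction with a structural upper bound. I would first settle the \emph{lower bound} (the construction that forces $f(n,r)$ to be large). For $r=4$ the extremal graphs of Figure~\ref{fig:Tight} give $\tfrac{2n}{5}$, but these rely essentially on pendent $4$-cycles; once $C_4$ is forbidden this construction collapses, so I expect the extremal ratio to drop. The natural family to test is the one where each pendent $4$-cycle is replaced by a pendent $r$-cycle and the connecting edges are subdivided just enough to respect the girth; computing $\gamma_h$ of these graphs (using Theorem~\ref{pathcyclehop} for the cyclic pieces and \eqref{eq:Pn} for the subdivided connectors) should yield a candidate value of $f(n,r)$. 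Since a single long cycle already gives $\gamma_h(C_n)\approx n/3$ and $\gamma_h(P_n)\approx n/3$ by Theorem~\ref{pathcyclehop}, and since the cycle $C_n$ itself has girth $n\ge r$, we always have $f(n,r)\ge(\tfrac13+o(1))n$; the quantitative goal is to decide whether one can do strictly better for each fixed $r$, and I conjecture $f(n,r)/n$ decreases from $\tfrac{2}{5}$ at $r=4$ toward $\tfrac13$ as $r\to\infty$.

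For the \emph{upper bound} I would mimic the minimal-counterexample strategy of Section~3: choose a counterexample minimizing $n+|E(G)|$ (and then maximizing some pendent-structure count), and show it cannot exist. The reducibility tools transfer almost verbatim — Proposition~\ref{obs:two_cycles} (deleting a twin vertex, or passing to a spanning subgraph), Proposition~\ref{obs:ckl} (cycle-bouquets $C(a_1,\dots,a_m)$), and the pendent-cycle normalization of Proposition~\ref{lemma:pendent_cycle} — but the short-cycle case analysis must be redone with $C_4,\dots,C_{r-1}$ forbidden. The effect of large girth is that the dangerous local configurations are no longer short cycles attached by a single edge but long pendent paths and long pendent cycles; each such piece behaves like a path or a cycle, on which $\gamma_h$ is already close to $\tfrac13$ of its length by Theorem~\ref{pathcyclehop}. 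Splitting the graph along cut-edges and cut-vertices, via analogues of Claim~\ref{claim:cutedge}--Claim~\ref{claim:W}, and bounding each piece by its path/cycle value should then give a matching upper bound.

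The main obstacle, I expect, is twofold. First, even when $G$ has girth $r$, the distance-two graph $G^*$ need not have large girth — two internally disjoint paths of length two between a common pair of vertices already create a $4$-cycle in $G^*$ — so one cannot simply feed $G^*$ into a high-girth domination theorem, and the structural control must be exercised on $G$ itself rather than on $G^*$. Second, and more seriously, the number of small exceptional graphs (the analogue of the set $\mathcal{B}$ and of the families in Figures~\ref{fig:b}, \ref{fig:c4c7}, and \ref{fig:some}) grows with $r$, so the finite case-checking that closes the argument for $r=4$ becomes unwieldy; handling it uniformly in $r$ — ideally by an amortized or discharging count that assigns each vertex a charge according to whether it lies on a pendent path, on a pendent cycle, or near a branch vertex — is where the real work lies. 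I would therefore aim first at the clean asymptotic statement $f(n,r)=(\tfrac13+o_r(1))\,n$, and only afterwards try to pin down the exact value and the extremal graphs for each fixed $r$.
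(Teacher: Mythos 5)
The statement you are addressing is not a theorem of the paper: it sits inside Problem~1 in Section~4, and the paper explicitly leaves the determination of $f(n,r)$ open for every $r\geq 5$, offering only the two remarks that $f(n,r)\geq\lceil n/3\rceil$ (from a long odd cycle) and that $f(n,4)=\frac{2}{5}n$, which is the content of Theorem~\ref{thm:min:degree}. So there is no proof in the paper to compare against, and, more importantly, what you have written is not a proof either: every substantive step is phrased as ``I would,'' ``I expect,'' or ``I conjecture.'' The lower bound $(\tfrac13+o(1))n$ you state is exactly the paper's own observation and is fine; but the upper bound --- the part that would actually establish a value of $f(n,r)$ --- is only a plan to imitate the minimal-counterexample argument of Section~3, with the decisive difficulties (the girth of $G^*$ not being controlled by the girth of $G$, and the growth of the exceptional families with $r$) correctly identified but not overcome. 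Your conjecture that $f(n,r)/n$ decreases from $\tfrac{2}{5}$ toward $\tfrac13$ is plausible but unsupported: you exhibit no construction with ratio strictly between $\tfrac13$ and $\tfrac{2}{5}$ for any fixed $r\geq 5$, and no argument ruling out that the ratio stays at $\tfrac{2}{5}$ or drops directly to $\tfrac13$ already at $r=5$.

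Concretely, the gap is that nothing is established beyond what the paper already records. Two self-contained subproblems would make the outline into actual progress: (i) for $r=5$, either exhibit a family of $\{C_3,C_4\}$-free connected graphs with $\delta\geq 2$ and $\gamma_h$ asymptotically larger than $n/3$, or prove none exists --- your ``replace each pendent $4$-cycle by a pendent $r$-cycle'' family must actually be computed, and note that the normalization in Proposition~\ref{lemma:pendent_cycle} exploits the fact that on a pendent $4$-cycle the vertex opposite the attachment point has a \emph{unique} $2$-step neighbor, a feature that fails for longer pendent cycles, so the extremal mechanism behind Figure~\ref{fig:Tight} does not transfer; and (ii) prove any upper bound of the form $(\tfrac13+\varepsilon_r)n$ with $\varepsilon_r\to 0$, which, as your first obstacle correctly notes, cannot be obtained by feeding $G^*$ into a high-girth domination theorem since $G^*$ may contain $4$-cycles regardless of the girth of $G$. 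Until at least one of these is carried out, the proposal remains a research outline rather than a proof.
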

\end{problem}
Considering a large odd cycle, we see that $f(n,r)\geq \lceil n/3\rceil$. In this paper, we showed that $f(n,4)=\frac{2}{5}n$. For $r\geq 5$, it is open.

\section*{Acknowledgements}
Shinya Fujita was supported by JSPS KAKENHI Grant number JP23K03202.
Boram Park was supported by the National Research Foundation of Korea(NRF) grant funded by the Korea government(MSIT) (No. RS-2025-00523206) and by the framework of international cooperation program managed by the National Research Foundation of Korea (NRF-2023K2A9A2A06059347).

\bibliographystyle{abbrv}
\bibliography{ref}

\end{document}